\tikzstyle{startstop} = [rectangle, rounded corners, minimum width=3cm, minimum height=1cm,text centered, text width=5.5cm, draw=black]
\tikzstyle{arrow} = [thick,->,>=stealth]
\newtheorem{theorem}{Theorem}[section]
\newtheorem{lemma}[theorem]{Lemma}
\theoremstyle{definition}
\newtheorem{definition}[theorem]{Definition}
\newtheorem{example}[theorem]{Example}
\theoremstyle{remark}
\newtheorem{remark}[theorem]{Remark}
\numberwithin{equation}{section}
\def\bz{\bm{\zeta}}
\def\bM{\bm{M}}
\def\bn{\bm{n}}
\def\bA{\bm{A}}
\def\bS{\bm{S}}
\def\bP{\bm{P}}
\def\bu{\bm{u}}
\def\bv{\bm{v}}
\def\Ho{H^1_0(\Omega)}
\def\O{\Omega}
\def\LT{{L_2(\O)}}
\def\cB{\mathcal{B}}
\def\cE{\mathcal{E}}
\def\cT{\mathcal{T}}
\def\HO{{H^1(\O)}}
\def\H2{{H^2(\O)}}
\def\eps{\varepsilon}
\def\Heh{{H^1_\eps(\O;\mathcal{T}_h)}}
\def\Heb{{H^1_{\eps,\beta}(\O)}}
\newcommand{\trinorm}[1]{%
  |\mkern-1.5mu|\mkern-1.5mu|
   #1
  |\mkern-1.5mu|\mkern-1.5mu|
}
\DeclareMathOperator*{\argmin}{argmin}
\begin{document}

\title[DG-MG for An Elliptic Optimal Control Problem]{Multigrid preconditioning for discontinuous Galerkin discretizations of an elliptic optimal control problem with a convection-dominated state equation}

\author{Sijing Liu}
\address{The Institute for Computational and Experimental Research in Mathematics\\
 Brown University\\
 Providence, RI\\
 USA}
 \email{sijing\_liu@brown.edu}
 \author{Valeria Simoncini}
 \address{Dipartimento di Matematica and (AM)$^2$, Alma Mater Studiorum - Università di Bologna, 40126 Bologna, and IMATI-CNR, Pavia,  Italy}
 \email{valeria.simoncini@unibo.it}

\subjclass{49J20, 49M41, 65N30, 65N55}
\keywords{elliptic distributed optimal control problems, convection-dominated problems, multilevel preconditioners, discontinuous Galerkin methods}
\date{\today}

\begin{abstract}
    We consider discontinuous Galerkin methods for an elliptic distributed optimal control problem constrained by a convection-dominated problem. We prove global optimal convergence rates using an inf-sup condition, with the diffusion parameter $\eps$ and regularization parameter $\beta$ explicitly tracked. We then propose a multilevel preconditioner based on downwind ordering to solve the discretized system. The preconditioner only requires two approximate solves of single convection-dominated equations using multigrid methods. Moreover, for the strongly convection-dominated case, only two sweeps of block Gauss-Seidel iterations are needed. We also derive a simple bound indicating the role played by the multigrid preconditioner. Numerical results are shown to support our findings.
\end{abstract}

\maketitle

\section{Introduction}

We consider the following elliptic optimal control problem.
Let $\Omega$ be a bounded convex polygonal domain in $\mathbb{R}^2$, $y_d\in \LT$ and $\beta$ be a positive constant. Find
\begin{equation}\label{optcon}
(\bar{y},\bar{u})=\argmin_{(y,u)}\left [ \frac{1}{2}\|y-y_d\|^2_{\LT}+\frac{\beta}{2}\|u\|^2_{\LT}\right],
\end{equation} 
where $(y,u)$ belongs to $H^1_0(\Omega)\times \LT$ and such that
\begin{equation}\label{eq:stateeq}
a(y,v)=\int_{\Omega}uv \ dx \quad \forall v\in H^1_0(\Omega).
\end{equation}
Here the bilinear form $a(\cdot,\cdot)$ is defined as
\begin{equation}\label{eq:abilinear}
  a(y,v)=\eps\int_{\Omega} \nabla y\cdot \nabla v\ dx+\int_{\Omega} (\bm{\zeta}\cdot\nabla y) v\ dx+\int_{\Omega} \gamma yv\ dx,
\end{equation}
where $\eps>0$, the vector field $\bm{\zeta}\in [W^{1,\infty}(\Omega)]^2$ and the function $\gamma\in L_{\infty}(\Omega)$ is nonnegative. We assume 
\begin{equation}\label{eq:advassump}
    \gamma-\frac12\nabla\cdot\bz\ge\gamma_0>0\quad \text{a.e.}\ \text{in}\ \Omega ,
\end{equation}
so that the problem \eqref{eq:stateeq} is well-posed. We mainly focus on the convection-dominated regime, 
namely, the case where $\eps\ll\|\bm{\zeta}\|_{0,\infty}:=\|\bm{\zeta}\|_{[L^\infty(\Omega)]^2}$.  

\begin{remark}
Throughout the paper we will follow the standard notation for differential operators, function spaces and norms that can be found for example in \cite{BS,Ciarlet,di2011mathematical}.
\end{remark}

It is well-known, see, e.g. \cite{Lions, Tro}, that the solution of \eqref{optcon}-\eqref{eq:stateeq} is characterized by
\begin{subequations}\label{eq:sp}
\begin{alignat}{3}
a(q,\bar{p})&=(\bar{y}-y_d,q)_\LT \quad &&\forall q\in H^1_0(\Omega),\\
\bar{p}+\beta\bar{u}&=0,\label{eq:sp2}\\
a(\bar{y},z)&=(\bar{u},z)_\LT  \quad &&\forall z\in H^1_0(\Omega),
\end{alignat}
\end{subequations}
where $\bar{p}$ is the adjoint state.
After eliminating $\bar{u}$ (cf. \cite{hinze2005variational}), we arrive at the saddle point problem

\begin{subequations}\label{eq:osp}
\begin{alignat}{2}
(\bar{p},z)_\LT+\beta a(\bar{y},z)&=0  \quad &&\forall z \in H^1_0(\Omega),\label{eq:osp2}\\
a(q,\bar{p})-(\bar{y},q)_\LT&=-(y_d,q)_\LT \quad &&\forall q\in H^1_0(\Omega).\label{eq:ospdual}
\end{alignat}
\end{subequations}

\vskip 0.1in
Note that the system \eqref{eq:osp} is unbalanced with respect to $\beta$
 since it only appears in \eqref{eq:osp2}.  This can be remedied by the following change of variables:
\begin{equation}\label{eq:CV}
\bar{p}=\beta^{\frac{1}{4}}\tilde{p}\quad\text{and}\quad
\bar{y}=\beta^{-\frac{1}{4}}\tilde{y}.
\end{equation}
 The resulting saddle point problem is
\begin{subequations}\label{subeq:BSPP}
\begin{alignat}{3}
(\tilde{p},{z})_\LT+\beta^{\frac{1}{2}}a(\tilde{y}, z)_\LT&=0
 &\qquad& \forall\,{z}\in \Ho,\label{eq:BSPP2}\\
 \beta^{\frac{1}{2}}a(q,\tilde{p})_\LT-(\tilde{y},q)_\LT
 &=-\beta^{\frac{1}{4}}(y_d,q)_\LT
  &\qquad&\forall \,{q}\in \Ho.\label{eq:BSPP1}
\end{alignat}
\end{subequations}

\subsection{Difficulties of designing and analyzing numerical methods for \eqref{subeq:BSPP}}

There are several difficulties regarding designing and analyzing numerical methods for \eqref{subeq:BSPP}. First, standard Galerkin methods for convection-dominated problems are known to be unstable and produce oscillations near the outflow boundary. Therefore, stabilization techniques are necessary to obtain any meaningful solutions of such problems. Moreover, the saddle point problem \eqref{subeq:BSPP} consists of a forward problem \eqref{eq:BSPP2} with convection field $\bm{\zeta}$ and a dual problem \eqref{eq:BSPP1} with convection field $-\bm{\zeta}$. This distinct feature in optimal control problems plays an important role in designing stable and accurate numerical methods. In fact, it has been shown in \cite{leykekhman2012local,heinkenschloss2010local} that the opposite convection fields in optimal control problems are nontrivial to handle. The boundary layers in both directions will propagate into the interior domain even if a stabilization technique is used (cf. \cite{heinkenschloss2010local}). This phenomenon is essentially different from the behaviors of boundary layers in single convection-dominated equations, in which it is well-known that the boundary layer will not propagate into the interior of the domain if proper stabilization techniques are utilized. One significant finding in \cite{leykekhman2012local} is that the weak treatment of the boundary conditions prevents the oscillations near the boundary layers from propagating into interior domain where the solution is smooth. Note that this can be done by using Nitsche's methods \cite{nitsche1971variationsprinzip} or discontinuous Galerkin methods \cite{leykekhman2012local,yucel2012distributed}.

\subsection{Difficulties of designing efficient solvers for \eqref{subeq:BSPP}}
Designing fast iterative solvers for the resulting discretized system from \eqref{subeq:BSPP} is nontrivial, especially in the convection-dominated regime. In this work, we focus on designing multigrid methods. For single convection-diffusion-reaction equations, it is well known that designing robust multigrid methods is difficult (see Section \ref{sec:mgdcr}). Designing and analyzing multigrid methods for saddle point problems like \eqref{subeq:BSPP} is even more challenging, and proper preconditioners must be devised.
In \cite{BLS,brenner2017multigrid,brenner2020multigrid,BOS,liu2024robust}, the authors designed a class of block-diagonal preconditioners and performed rigorous analyses of the multigrid methods that converge in the energy norm. Other approaches can be found in \cite{Scho,Takacs,simon2009schwarz,schoberl2007symmetric} and the references therein. However, almost all the preconditioners deteriorate in the convection-dominated regime. We refer to \cite{mardal2022robust,pearson2011fast} for a known robust preconditioner in the convection-dominated regime which is based on the Schur complement.
\
\\

Our contributions in this paper are two-fold. First, we propose and analyze an upwind discontinuous Galerkin (DG) method for solving \eqref{subeq:BSPP} where the diffusion parameter $\eps$ and regularization parameter $\beta$ are explicitly tracked. We show that the DG methods are optimal, for fixed $\beta$, in the sense of
\begin{equation}\label{eq:opesti}
    \|p-p_h\|_{1,\eps}+\|y-y_h\|_{1,\eps}\le\left\{\begin{array}{ll}
        O(h)\quad&\text{if}\ \eqref{eq:stateeq}\ \text{is diffusion-dominated,}\\
        \\
        O(h^\frac32)\quad&\text{if}\ \eqref{eq:stateeq}\ \text{is convection-dominated,}\\
        \\
        O(h^2)\quad&\text{if}\ \eqref{eq:stateeq}\ \text{is reaction-dominated,}
        \end{array}\right.
\end{equation}
where the norm $\|\cdot\|_{1,\eps}$ is defined in \eqref{eq:hbhnorm} and $h$ is the meshsize of the triangulation. Here $(p,y)$ are solutions to \eqref{eq:regu} and $(p_h,y_h)$ are solutions to \eqref{eq:dgprob}. Our analysis is based on an inf-sup condition \cite{brenner2020multigrid} and a crucial boundedness result in \cite{di2011mathematical}. Note that the control is not explicitly discretized, instead, we eliminate the control using the adjoint state \cite{hinze2005variational}, and hence we have a saddle point problem involving the state and the adjoint state. This technique is well-known, and it can be found, for instance, in \cite{brenner2020multigrid,gong2022optimal,gaspoz2019quasi}. We would like to point out that our DG methods are identical to those in \cite{leykekhman2012investigation,leykekhman2012local,yucel2012distributed}, where similar estimates to \eqref{eq:opesti} were derived in \cite{leykekhman2012investigation}. However, in our analysis, we do not decouple the state and the adjoint state by using intermediate problems, instead, we utilize the inf-sup condition and analyze the state and the adjoint state simultaneously, which is different from the one in  \cite{leykekhman2012investigation}.

Secondly, we design an efficient preconditioner to solve the discretized system. We combine the block-structured preconditioner by Pearson and Wathen \cite{pearson2011fast} with downwind ordering multigrid methods \cite{gopalakrishnan2003multilevel} to construct a highly efficient preconditioner. There are two advantages to combining DG methods with the preconditioner in \cite{pearson2011fast}. First, the mass matrix of DG methods is block-diagonal, allowing the inverse of the mass matrix to be computed exactly. Second, the downwind ordering technique makes the multigrid methods with block Gauss-Seidel iteration almost an exact solver as $\eps \rightarrow 0$. In particular, as $\eps \rightarrow 0$, a single sweep of block Gauss-Seidel is almost an exact solver, eliminating the need for multigrid cycles in those cases. Overall, with theses techniques, the implementation of our preconditioner is extremely efficient in the convection-dominated regime, which only consists of two multigrid solves of single convection-diffusion-reaction equations. In terms of the quality of the preconditioner, we provide a bound of the distance between the approximate preconditioner and the ideal preconditioner, which justifies the efficiency of our preconditioner. Note that we mainly focus on the case where $\eps\rightarrow0$ in this work, i.e, the convection-dominated case, instead of the case where $\beta\rightarrow0$, which is in contrast to \cite{pearson2011fast,brenner2020multigrid}. Nonetheless, numerical results in Section \ref{sec:numerics} indicate that our preconditioner is also robust when $\beta\rightarrow0$.

The rest of the paper is organized as follows.  In Section \ref{sec:conprob}, we discuss the properties of the continuous problem \eqref{subeq:BSPP} and establish its well-posedness. In Section \ref{sec:dgfem}, we introduce the DG methods and derive the inf-sup condition as well as an important boundedness result. In Section \ref{sec:convanalysis}, we establish concrete error estimates for the DG methods in the convection-dominated regime. We then propose a block preconditioner in Section \ref{sec:mgprecon}, where a crucial downwind ordering multigrid method with block Gauss-Seidel smoothers is introduced. A simple estimate is also derived in Section~\ref{sec:mgprecon} to illustrate the quality of our preconditioner. Finally, we provide some numerical results in Section \ref{sec:numerics} and end with some concluding remarks in Section \ref{sec:cncldremarks}.

Throughout this paper, we use $C$
 (with or without subscripts) to denote a generic positive
 constant that is independent of any mesh
 parameter, $\beta$ and $\eps$, unless otherwise stated.
  Also to avoid the proliferation of constants, we use the
   notation $A\lesssim B$ (or $A\gtrsim B$) to
  represent $A\leq \text{(constant)}B$. The notation $A\approx B$ is equivalent to
  $A\lesssim B$ and $B\lesssim A$.

\section{Continuous Problem}\label{sec:conprob}

We rewrite \eqref{subeq:BSPP} in a concise form
\begin{equation}\label{eq:conciseform}
\mathcal{B}((\tilde{p},\tilde{y}),(q,z))=-\beta^{\frac14}(y_d,q)_\LT \quad \quad \forall (q,z)\in H^1_0(\Omega)\times H^1_0(\Omega),
\end{equation}
where
\begin{equation}\label{bilinear}
\mathcal{B}((p,y),(q,z))=\beta^\frac12a(q,p)-(y,q)_\LT+(p,z)_\LT+\beta^\frac12 a(y,z).
\end{equation}

Let $\|p\|_{H^1_{\eps,\beta}(\Omega)}$ be defined by
\begin{equation}
\|p\|^2_{H^1_{\eps,\beta}(\Omega)}=\beta^\frac12\Big(\eps|p|^2_{H^1(\Omega)}+\|p\|^2_{\LT}\Big)+\|p\|^2_{\LT}.
\end{equation}
We have the following lemmas regarding the bilinear form $\cB$ with respect to the norm $\|\cdot\|_\Heb$.
\begin{lemma}\label{lem:continuous}
We have
\begin{equation}\label{cont}
\mathcal{B}((p,y),(q,z))\lesssim \frac{1}{\sqrt{\eps}}(\|p\|^2_\Heb+\|y\|^2_\Heb)^{\frac{1}{2}}(\|q\|^2_\Heb+\|z\|^2_\Heb)^{\frac{1}{2}}
\end{equation}
for any $(p,y), (q,z)\in H^1_0(\Omega)\times H^1_0(\Omega).$
\end{lemma}

\begin{proof}
It follows from integration by parts and Cauchy-Schwarz inequality (cf. \cite[Chapter 9]{knabner2004numerical}) that 
\begin{equation}
    \begin{aligned}
        \mathcal{B}((p,y),(q,z)) &\le\beta^\frac12\eps|p|_{H^1(\Omega)}|q|_{H^1(\Omega)}+\beta^\frac12\|\bm{\zeta}\|_{0,\infty}\|p\|_{\LT}|q|_{H^1(\Omega)}\\
        &\quad\quad+\beta^\frac12(|\bm{\zeta}|_{1,\infty}+\|\gamma\|_{\infty})\|q\|_{\LT}\|p\|_{\LT}\\
&\quad+\beta^\frac12\eps|y|_{H^1(\Omega)}|z|_{H^1(\Omega)}+\beta^\frac12\|\bm{\zeta}\|_{0,\infty}\|y\|_{\LT}|z|_{H^1(\Omega)}\\
&\quad\quad+\beta^\frac12(|\bm{\zeta}|_{1,\infty}+\|\gamma\|_{\infty})\|y\|_{\LT}\|z\|_{\LT}\\
&\quad+\|q\|_{L^2(\Omega)}\|y\|_{L^2(\Omega)}+\|p\|_{L^2(\Omega)}\|z\|_{L^2(\Omega)}\\
&\lesssim (\|p\|^2_\Heb+\|y\|^2_\Heb)^{\frac{1}{2}}\\
&\quad\quad\times(\beta^\frac12\|q\|^2_\HO+\|q\|^2_\LT+\beta^\frac12\|z\|^2_\HO+\|z\|^2_\LT)^{\frac{1}{2}}\\
&\lesssim \frac{1}{\sqrt{\eps}}(\|p\|^2_\Heb+\|y\|^2_\Heb)^{\frac{1}{2}}(\|q\|^2_\Heb+\|z\|^2_\Heb)^{\frac{1}{2}}.
    \end{aligned}
\end{equation}
\end{proof}

\begin{lemma}\label{lem:l2}
We have
\begin{equation}\label{eq:binfsup}
\sup_{(q,z)\in H^1_0(\Omega)\times H^1_0(\Omega)}\frac{\mathcal{B}((p,y),(q,z))}{(\|q\|^2_\Heb+\|z\|^2_\Heb)^\frac{1}{2}}\ge 2^{-\frac{1}{2}}(\|p\|^2_\Heb+\|y\|^2_\Heb)^\frac{1}{2}
\end{equation}
for any $(p,y) \in H^1_0(\Omega)\times H^1_0(\Omega).$
\end{lemma}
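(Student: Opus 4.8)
The plan is to verify the inf-sup bound constructively: for each fixed $(p,y)\in\Ho\times\Ho$ I would exhibit a single explicit test pair $(q,z)$ that controls the \emph{full} norm of $(p,y)$ in the numerator of \eqref{eq:binfsup} while inflating the denominator by only a factor $\sqrt{2}$. The choice I would make is the ``rotated'' pair $(q,z)=(p-y,\,p+y)$, which is tailored to the skew coupling of the state and adjoint blocks in \eqref{bilinear}.

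First I would substitute this pair into the definition \eqref{bilinear} and expand. The decisive feature is that the two convection/reaction cross terms $\pm\beta^{\frac12}a(y,p)$ cancel against each other, and the mixed $L_2$ contributions $-(y,p)_\LT$ and $+(p,y)_\LT$ cancel by symmetry of the $L_2$ inner product, leaving the clean diagonal expression
\[
\mathcal{B}\big((p,y),(p-y,p+y)\big)=\beta^{\frac12}a(p,p)+\beta^{\frac12}a(y,y)+\|p\|_\LT^2+\|y\|_\LT^2 .
\]
Next I would invoke coercivity of $a$: integrating the convection term by parts for $v\in\Ho$ gives $\int_\Omega(\bz\cdot\nabla v)\,v\,dx=-\tfrac12\int_\Omega(\nabla\cdot\bz)\,v^2\,dx$, so assumption \eqref{eq:advassump} yields $a(v,v)\ge\eps|v|_{H^1(\Omega)}^2+\|v\|_\LT^2$. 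Applying this to $p$ and to $y$ shows that the surviving expression dominates $\|p\|_\Heb^2+\|y\|_\Heb^2$, i.e.\ the numerator is bounded below by $\|p\|_\Heb^2+\|y\|_\Heb^2$.

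For the denominator, the key observation is that $\|\cdot\|_\Heb$ is itself a genuine Hilbert norm (induced by the inner product $\beta^{\frac12}\eps(\nabla\cdot,\nabla\cdot)+\beta^{\frac12}(\cdot,\cdot)_\LT+(\cdot,\cdot)_\LT$), so the parallelogram identity applies directly:
\[
\|p-y\|_\Heb^2+\|p+y\|_\Heb^2=2\big(\|p\|_\Heb^2+\|y\|_\Heb^2\big).
\]
Hence $(\|q\|_\Heb^2+\|z\|_\Heb^2)^{\frac12}=\sqrt{2}\,(\|p\|_\Heb^2+\|y\|_\Heb^2)^{\frac12}$, and dividing the numerator bound by this denominator produces exactly the factor $2^{-\frac12}$ asserted in \eqref{eq:binfsup}.

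The expansion and the integration-by-parts coercivity estimate are routine; I expect the only genuinely delicate point to be identifying the rotated test pair and checking that the coercivity constant matches the $L_2$ weight in $\|\cdot\|_\Heb$ cleanly, so that assumption \eqref{eq:advassump} delivers the unit constant in front of $\|v\|_\LT^2$ rather than a generic $\gamma_0$. Once the pair $(p-y,p+y)$ is in hand, both the cancellations in the numerator and the parallelogram identity in the denominator are forced, and the sharp constant $2^{-\frac12}$ drops out automatically.
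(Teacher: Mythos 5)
Your proposal is correct and follows essentially the same route as the paper: the same rotated test pair $(q,z)=(p-y,\,p+y)$, the same cancellation of the cross terms leaving $\beta^{\frac12}a(p,p)+\beta^{\frac12}a(y,y)+\|p\|_\LT^2+\|y\|_\LT^2$, coercivity of $a$ via integration by parts and assumption \eqref{eq:advassump}, and the parallelogram law for the denominator. The one delicate point you flag is real but shared with the paper: \eqref{eq:advassump} only gives $a(v,v)\ge\eps|v|_{H^1(\Omega)}^2+\gamma_0\|v\|_\LT^2$, so the clean constant $2^{-\frac12}$ strictly requires $\gamma_0\ge 1$ (otherwise a $\min(1,\gamma_0)$ factor appears), which is exactly the looseness hidden in the paper's use of $\gtrsim$ in \eqref{eq:bcoer}.
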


\begin{proof}
Given $(p,y)$, we take $q=p-y,\ z=p+y$. We have
\begin{equation}\label{eq:bcoer}
\begin{aligned}
\mathcal{B}((p,y),(q,z))&=\beta^\frac12a(p,p)+(p,p)+(y,y)+\beta^\frac12a(y,y)\\
&\gtrsim\|p\|^2_\Heb+\|y\|^2_\Heb
\end{aligned}
\end{equation}
where we use the fact $\gamma-\frac12\nabla\cdot\bm{\zeta}\ge \gamma_0\ge0 $ a.e. in $\Omega$.
Also, due to the parallelogram law, we have,
\begin{equation}\label{eq:paral}
(\|q\|^2_\Heb+\|z\|^2_\Heb)^\frac{1}{2}=2^\frac{1}{2}(\|p\|^2_\Heb+\|y\|^2_\Heb)^\frac{1}{2}.
\end{equation}
Combining \eqref{eq:bcoer} and \eqref{eq:paral}, we immediately obtain \eqref{eq:binfsup}. 
\end{proof}

\begin{remark}
    According to  standard saddle point theory \cite{Bab,Brezzi}, Lemma \ref{lem:continuous} and Lemma \ref{lem:l2} guarantee the well-posedness of the problem \eqref{eq:conciseform}.
\end{remark}

For the sake of generality, we shall also consider the following more general problem. Let $(p,y)\in H^1_0(\Omega)\times H^1_0(\Omega)$ satisfies
\begin{equation}\label{eq:regu}
\mathcal{B}((p,y),(q,z))=(f,q)_\LT+(g,z)_\LT\quad \quad \forall (q,z)\in H^1_0(\Omega)\times H^1_0(\Omega),
\end{equation}
where $(f,g)\in \LT\times \LT$ and $\mathcal{B}$ is defined in \eqref{bilinear}.



\section{Discrete Problem}\label{sec:dgfem}

In this section we discretize the saddle point problem \eqref{subeq:BSPP} by a DG method \cite{arnold2002unified,arnold1982interior,brezzi2004discontinuous}. Let $\mathcal{T}_h$ be a quasi-uniform and shape regular simplicial triangulation of $\Omega$.
The diameter of $T\in\mathcal{T}_h$ is denoted by $h_T$ and $h=\max_{T\in\mathcal{T}_h}h_T$ is the mesh diameter. 
Let $\mathcal{E}_h=\mathcal{E}^b_h\cup\mathcal{E}^i_h$ where $\cE^i_h$ (resp., $\cE^b_h$) represents the set of interior edges (resp., boundary edges).

We further decompose the boundary edges $\cE^b_h$ into the inflow part $\cE^{b,-}_h$ and the outflow part $\cE^{b,+}_h$ which are defined as follows,
\begin{align}
    \cE^{b,-}_h&=\{e\in\cE^b_h: e\subset\{x\in\partial\O: \bz(x)\cdot\bm{n}(x)<0\}\},\\
    \cE^{b,+}_h&=\cE^b_h\setminus\cE^{b,-}_h.
\end{align}
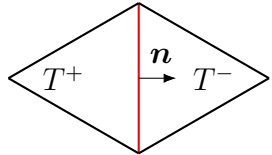
\begin{figure}[h]
\centering
\begin{tikzpicture}
\draw[thick,red] plot coordinates {(0,1) (0,-1)};
\draw[thick] plot coordinates {(-1.732,0) (0,1)};
\draw[thick] plot coordinates {(-1.732,0) (0,-1)};
\draw[thick] plot coordinates {(1.732,0) (0,1)};
\draw[thick] plot coordinates {(1.732,0) (0,-1)};

\node at (-1,0) {$T^+$};
\node at (1,0) {$T^-$};
\draw[-Latex](0,0) -- (0.5,0);
\node at (0.3,0.3) {$\bm{n}$};
\end{tikzpicture}
\caption{Interior edges} \label{figure:ie}
\end{figure}

For an edge $e\in \mathcal{E}^i_h$, let $h_e$ be the length of $e$. For each edge we associate a fixed unit normal $\bm{n}$. We denote by $T^+$ the element for which $\bm{n}$ is the outward normal, and $T^-$ the element for which $-\bm{n}$ is the outward normal (see Figure \ref{figure:ie}). We define the discontinuous finite element space $V_h$ as 
\begin{equation}
    V_h=\{v\in\LT:v|_T\in\mathbb{P}_1(T)\quad\forall\ T\in\mathcal{T}_h\}.
\end{equation}
For $v\in V_h$ on an edge $e$, we define
\begin{equation}
    v^+=v|_{T^+}\quad\text{and}\quad v^-=v|_{T^-}.
\end{equation}
We define the jump and average for $v\in V_h$ on an edge $e$ as follows,
\begin{equation}
    [v]=v^+-v^-,\quad \{v\}=\frac{v^++v^-}{2}.
\end{equation}
For $e\in\mathcal{E}_h^b$ with $e\in\partial T$, we let
\begin{equation}
    [v]=\{v\}=v|_T.
\end{equation}
We also denote 
\begin{equation}
    (w,v)_e:=\int_e wv\ \!ds\quad\text{and}\quad(w,v)_T:=\int_T wv\ \!dx.
\end{equation}

We introduce the following notation (cf. \cite{di2011mathematical}):
\begin{equation}
    \tau_c=\frac{1}{\max(\|\gamma\|_{\infty},\ |\bm{\zeta}|_{1,\infty})}.
\end{equation}
The quantity $\tau_c$ is useful in the convergence analysis. Note that $\tau_c$ is finite because $\|\gamma\|_{\infty}=|\bm{\zeta}|_{1,\infty}=0$ implies $\gamma-\frac12\nabla\cdot\bm{\zeta}=0$, which contradicts our assumption \eqref{eq:advassump}.

\subsection{Discontinuous Galerkin methods}
 DG methods for \eqref{eq:conciseform} aim to find $(\tilde{p}_h,\tilde{y}_h)\in V_h\times V_h$ such that
\begin{equation}\label{eq:dg}
\mathcal{B}_h((\tilde{p}_h,\tilde{y}_h),(q,z))=-(y_d,q)_\LT \quad \quad \forall (q,z)\in V_h\times V_h,
\end{equation}
where
\begin{equation}\label{eq:dgbilinear}
\mathcal{B}_h((p,y),(q,z))=\beta^\frac12a_h(q,p)-(y,q)_\LT+(p,z)_\LT+\beta^\frac12a_h(y,z).
\end{equation}
The bilinear form $a_h(\cdot,\cdot)$ is defined by
\begin{equation}\label{eq:ahdef}
    a_h(u,v)=\eps a_h^{\text{sip}}(u,v)+a^{\text{ar}}_h(u,v)\quad\forall u,v\in V_h,
\end{equation}
where  the  term
\begin{equation}\label{eq:dgbilinearsip}
\begin{aligned}
    a^{\text{sip}}_h(u,v)=&\sum_{T\in\mathcal{T}_h}(\nabla u, \nabla v)_T-\sum_{e\in\mathcal{E}_h}(\{\bm{n}\cdot\nabla u\},[v])_e
    -\sum_{e\in\mathcal{E}_h}(\{\bm{n}\cdot\nabla v\},[u])_e\\
    &+\sigma\sum_{e\in\mathcal{E}_h} h_e^{-1}([u],[v])_e
\end{aligned}
\end{equation}
is the bilinear form of the symmetric interior penalty (SIP) method with sufficiently large penalty parameter $\sigma$.
The upwind DG scheme (cf. \cite{brezzi2004discontinuous,di2011mathematical}) for the advection-reaction term is defined as
\begin{equation}\label{eq:ardef}
         a^{\text{ar}}_h(w,v)=\sum_{T\in\mathcal{T}_h}(\bz\cdot\nabla w+\gamma w, v)_T-\sum_{e\in\cE^i_h}(\bn\cdot\bz[w],v^{\text{down}})_e-\sum_{e\in\cE^{b,-}_h}(\bn\cdot\bz\ \!w,v)_e.
     \end{equation} 
     Here, the downwind value $v^{\text{down}}$ of a function on an interior edge $e\in\cE_h^i$ is defined as
\begin{equation}\label{eq:upwvalue}
    v^{\text{down}}=\left\{
    \begin{aligned}
    v^-\quad\text{if}\quad\bz\cdot\mathbf{n}\ge0,\\
    v^+\quad\text{if}\quad\bz\cdot\mathbf{n}<0.
    \end{aligned}
    \right.
\end{equation}

Note that the scheme \eqref{eq:ardef} is equivalent to the following,
\begin{equation}\label{eq:ardefeq}
\begin{aligned}
    a^{\text{ar}}_h(w,v)&=\sum_{T\in\mathcal{T}_h}(\bz\cdot\nabla w+\gamma w, v)_T-\sum_{e\in\cE^i_h\cup\cE_h^{b,-}}(\bn\cdot\bz[w],\{v\})_e\\
    &\quad+\sum_{e\in\cE^i_h}\frac12 (|\bm{\zeta}\cdot\bn|[w],[v])_e.
\end{aligned}
\end{equation}

DG methods for the more general problem \eqref{eq:regu} aim to find $(p_h,y_h)\in V_h\times V_h$ such that
\begin{equation}\label{eq:dgprob}
\mathcal{B}_h((p_h,y_h),(q,z))=(f,q)_\LT+(g,z)_\LT\quad \quad \forall (q,z)\in V_h\times V_h.
\end{equation}

In this context, we define the norm $\trinorm{\cdot}$ as
\begin{equation}\label{eq:hbhnormt}
  \trinorm{v}^2=\beta^\frac12\|v\|^2_{1,\eps}+\|v\|^2_\LT,
\end{equation}
and the norm $\|\cdot\|_{1,\eps}$ (cf. \cite{di2011mathematical}) as
\begin{equation}\label{eq:hbhnorm}
  \|v\|_{1,\eps}^2:=\|v\|^2_{\Heh}=\eps\|v\|^2_{d}+\|v\|^2_{ar},
\end{equation}
where 
\begin{equation}\label{eq:hbhnorm1}
  \|v\|_{d}^2=\sum_{T\in\mathcal{T}_h}\|\nabla v\|^2_{L_2(T)}+\sum_{e\in\mathcal{E}_h}\frac{1}{h_e}\|[v]\|^2_{L_2(e)}+\sum_{e\in\mathcal{E}_h}h_e\|\{\bm{n}\cdot\nabla v\}\|_{L_2(e)}^2
\end{equation}
and
\begin{equation}\label{eq:hbhnorm2}
  \|v\|_{ar}^2=
  {\tau_c}^{-1}\|v\|^2_{\LT}+\int_{\partial\O}\frac12|\bz\cdot\bn|v^2\ \!ds+\sum_{e\in\cE_h^i}\int_e\frac12|\bz\cdot\bn|[v]^2\ \!ds.
\end{equation}

\subsection{The properties of $a_h(\cdot,\cdot)$}
Let $V=\Ho\cap H^2(\O)$. It is well-known that
        \begin{alignat}{3}
            a^{sip}_h(w,v)&\lesssim\|w\|_{d}\|v\|_{d}&&\quad \forall w,v\in V+V_h,\label{eq:ahsipcont}\\  
            a^{sip}_h(v,v)&\gtrsim\|v\|^2_{d}&&\quad\forall v\in V_h,\label{eq:ahsipcoer}
        \end{alignat}
for sufficiently large $\sigma$ (cf. \cite{di2011mathematical,BS}).
We also have (\cite{di2011mathematical})
\begin{equation}\label{eq:arcoer}
    a^{ar}_h(v,v)\gtrsim\min(1,\gamma_0\tau_c)\|v\|^2_{ar} \quad\forall v\in V_h.
\end{equation}
One can obtain (cf. \cite[Lemma 2.30]{di2011mathematical})
\begin{equation}\label{eq:ahboundstar}
    a_h^{ar}(w-\pi_hw,v)\lesssim\|w-\pi_hw\|_{ar,*}\|v\|_{ar}\quad \forall w\in V, v\in V_h,
\end{equation}
for a stronger norm $\|\cdot\|_{ar,*}$ defined as
\begin{equation}\label{eq:arsnorm}
    \|v\|_{ar,*}^2=\|v\|_{ar}^2+\sum_{T\in\cT_h}\|\bm{\zeta}\|_{0,\infty}\|v\|^2_{L_2(\partial T)}.
    \end{equation}
Here the operator $\pi_h: V\rightarrow V_h$ is the $L_2$ orthogonal projection. Note that the following is also true,
\begin{equation}\label{eq:ahboundstard}
    a_h^{ar}(v,w-\pi_hw)\lesssim\|w-\pi_hw\|_{ar,*}\|v\|_{ar}\quad \forall w\in V, v\in V_h.
\end{equation}
\begin{remark}
    The estimate \eqref{eq:ahboundstard} is not derived from \eqref{eq:ahboundstar} since $a_h^{ar}
    (\cdot,\cdot)$ is nonsymmetric. However, 
    the technique  used in \cite[Lemma 2.30]{di2011mathematical} to derive \eqref{eq:ahboundstar} can be employed to establish \eqref{eq:ahboundstard}.
\end{remark}
Overall, we have
\begin{alignat}{3}
            a_h(w-\pi_hw,v)&\lesssim\|w-\pi_hw\|_{1,\eps,*}\|v\|_{1,\eps}&&\quad \forall w\in V, v\in V_h,\label{eq:ahcontf}\\  
            a_h(v,w-\pi_hw)&\lesssim\|w-\pi_hw\|_{1,\eps,*}\|v\|_{1,\eps}&&\quad \forall w\in V, v\in V_h,\label{eq:ahcontb}\\  
            a_h(v,v)&\gtrsim\min(1,\gamma_0\tau_c)\|v\|^2_{1,\eps}&&\quad\forall v\in V_h,\label{eq:ahcoer}
        \end{alignat}
where the norm $\|\cdot\|_{1,\eps,*}$ is defined as
\begin{equation}\label{eq:hstarnorm}
    \|\cdot\|^2_{1,\eps,*}=\eps\|\cdot\|_d^2+\|\cdot\|_{ar,*}^2.
\end{equation} 

\subsection{The properties of $\cB_h$}

By \eqref{eq:dgbilinear}, \eqref{eq:ahcoer} and a direct calculation, we have
\begin{equation}\label{eq:bhcoer1}
\begin{aligned}
  \mathcal{B}_h&((p,y),(p-y,p+y))\\
  &=\beta^\frac12a_h(p,p)+(p,p)_\LT+(y,y)_\LT+\beta^\frac12a_h(y,y)\\
  &\gtrsim\min(1,\gamma_0\tau_c)\big(\trinorm{p}^2+\trinorm{y}^2\big)
\end{aligned}
\end{equation}
and
\begin{equation}\label{eq:bhcoer2}
  \trinorm{p-y}^2+\trinorm{p+y}^2=2(\trinorm{p}^2+\trinorm{y}^2)
\end{equation}
by the parallelogram law.
It follows from \eqref{eq:bhcoer1} and \eqref{eq:bhcoer2} that
\begin{equation}\label{eq:bhwposed}
\begin{aligned}
  \trinorm{p_h}&+\trinorm{y_h}\\
  &\lesssim \frac{1}{\min(1,\gamma_0\tau_c)} \sup_{(q,z)\in V_h\times V_h}\frac{\mathcal{B}_h((p_h,y_h),(q,z))}{\trinorm{q}+\trinorm{z}}\quad\forall(p_h,y_h)\in V_h\times V_h.
  \end{aligned}
\end{equation}
Define the norm
\begin{equation}\label{eq:tristarnorm}
    \trinorm{v}^2_*=\beta^\frac12\|v\|^2_{1,\eps,*}+\|v\|^2_\LT.
\end{equation}
It follows from \eqref{eq:hbhnormt}, \eqref{eq:ahcontf}, \eqref{eq:ahcontb}, and \eqref{eq:arsnorm} that, for any $(p,y)\in V\times V$ and $(q,z)\in V_h\times V_h$,
\begin{equation}\label{eq:bhcont}
\begin{aligned}
    &\cB_h((p-\pi_h,y-\pi_hy),(q,z))\\
    &=\beta^\frac12a_h(q,p-\pi_hp)-(y-\pi_hy,q)_\LT+(p-\pi_hp,z)_\LT\\
    &\quad+\beta^\frac12a_h(y-\pi_hy,z)\\
    &\lesssim \beta^\frac12\|p-\pi_hp\|_{1,\eps,*}\|q\|_{1,\eps}+\|y-\pi_hy\|_\LT\|q\|_\LT\\
    &\hspace{1cm}+\|p-\pi_hp\|_\LT\|z\|_\LT+\beta^\frac12\|y-\pi_hy\|_{1,\eps,*}\|z\|_{1,\eps}\\
    &\lesssim (\trinorm{p-\pi_hp}_*+\trinorm{y-\pi_hy}_*)(\trinorm{q}+\trinorm{z}).
\end{aligned}
\end{equation}

\subsection{Consistency}

It is well-known that the DG method \eqref{eq:dgprob} is consistent (cf. \cite{arnold2002unified,riviere2008discontinuous,BS,di2011mathematical}). In other words, we have the following Galerkin orthogonality,
\begin{equation}\label{eq:go}
    \cB_h((p-p_h,y-y_h),(q,z))=0 \quad \forall(q,z)\in V_h\times V_h,
\end{equation}
where $(p,y)$ is the solution to \eqref{eq:regu} and $(p_h,y_h)$ is the solution to \eqref{eq:dgprob}.

\section{Convergence Analysis of DG Methods}\label{sec:convanalysis}

In this section, we establish concrete error estimates for the DG method \eqref{eq:dgprob}.
We first recall some preliminary results.
For $T\in\mathcal{T}_h$ and $v\in H^{1+s}(T)$ where $s\in(\frac12,1]$, the following trace inequalities with scaling are standard (cf. \cite[Lemma 7.2]{ern2017finite} and \cite[Proposition 3.1]{ciarlet2013analysis}),
    \begin{align}
         \|v\|_{L_2(\partial T)}&\lesssim(h_T^{-\frac12}\|v\|_{L_2(T)}+h_T^{s-\frac12}|v|_{H^s(T)}),\label{eq:traceinq}\\
         \|\nabla v\|_{L_2(\partial T)}&\lesssim(h_T^{-\frac12}\|\nabla v\|_{L_2(T)}+h_T^{s-\frac12}|\nabla v|_{H^s(T)}).\label{eq:traceinq1}
    \end{align}

We then have the following standard projection estimate \cite{BS}. By \eqref{eq:traceinq}, \eqref{eq:traceinq1} and a standard inverse inequality, we obtain
\begin{equation}\label{eq:projesti}
  \|z-\pi_hz\|_{\LT}+h\|z-\pi_hz\|_d\lesssim h^2\|z\|_{H^2(\O)}\quad\forall z\in V.
\end{equation}
It follows from \eqref{eq:hbhnorm2} that
\begin{equation}\label{eq:projestiar}
    \|z-\pi_hz\|_{ar}\lesssim (\tau_c^{-\frac12}h^2+\|\bm{\zeta}\|_{0,\infty}^\frac12h^{\frac32})\|z\|_{H^2(\O)}\quad\forall z\in V.
\end{equation}

We are ready to state our new error bound.

\begin{theorem}\label{thm:dgesti}
    Let $(p,y)$ be the solution to \eqref{eq:regu} and $(p_h,y_h)$ be the solution to \eqref{eq:dgprob}. We have,
    \begin{equation}\label{eq:dgesti}
    \begin{aligned}
         \trinorm{p-p_h}&+\trinorm{y-y_h}\\
        &\lesssim C_\dagger\Big(\beta^\frac14(\eps^\frac12+\|\bm{\zeta}\|^{\frac12}_{0,\infty}h^\frac12+\tau_c^{-\frac12}h)h+h^2\Big)(\|p\|_{H^2(\O)}+\|y\|_{H^2(\O)}),
    \end{aligned}
    \end{equation}
    where $C_\dagger=(1+\frac{1}{\min(1,\gamma_0\tau_c)})$.
\end{theorem}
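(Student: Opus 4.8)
The plan is to follow the classical Strang/Céa strategy adapted to the inf-sup setting: split the error into an interpolation part and a discrete part, estimate the discrete part using the stability bound \eqref{eq:bhwposed} together with Galerkin orthogonality \eqref{eq:go} and the consistency-type continuity \eqref{eq:bhcont}, and finish by converting the $\trinorm{\cdot}_*$ projection estimates into explicit powers of $h$, $\eps$, $\|\bz\|_{0,\infty}$ and $\tau_c$ via \eqref{eq:projesti} and \eqref{eq:projestiar}.

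First I would write $p - p_h = (p - \pi_h p) + (\pi_h p - p_h)$ and similarly for $y$, so that $\trinorm{p-p_h} + \trinorm{y-y_h}$ is controlled by the two interpolation errors $\trinorm{p-\pi_h p} + \trinorm{y-\pi_h y}$ plus the two discrete errors $\trinorm{\pi_h p - p_h} + \trinorm{\pi_h y - y_h}$. The interpolation part is handled directly by the projection estimates; the discrete part is where the structure is used. Setting $(\chi,\psi) = (\pi_h p - p_h, \pi_h y - y_h) \in V_h \times V_h$, I would apply the discrete stability bound \eqref{eq:bhwposed} to $(\chi,\psi)$, which gives
\begin{equation*}
  \trinorm{\chi} + \trinorm{\psi} \lesssim \frac{1}{\min(1,\gamma_0\tau_c)} \sup_{(q,z)\in V_h\times V_h} \frac{\cB_h((\chi,\psi),(q,z))}{\trinorm{q}+\trinorm{z}}.
\end{equation*}
The key algebraic step is to rewrite the numerator. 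By bilinearity, $\cB_h((\chi,\psi),(q,z)) = \cB_h((\pi_h p - p, \pi_h y - y),(q,z)) + \cB_h((p - p_h, y - y_h),(q,z))$. Galerkin orthogonality \eqref{eq:go} kills the second term for all $(q,z)\in V_h\times V_h$, leaving only $\cB_h((\pi_h p - p, \pi_h y - y),(q,z)) = -\cB_h((p-\pi_h p, y - \pi_h y),(q,z))$, which is exactly the quantity bounded in \eqref{eq:bhcont} by $(\trinorm{p-\pi_h p}_* + \trinorm{y-\pi_h y}_*)(\trinorm{q}+\trinorm{z})$. Dividing by $\trinorm{q}+\trinorm{z}$ and taking the supremum therefore yields $\trinorm{\chi}+\trinorm{\psi} \lesssim \frac{1}{\min(1,\gamma_0\tau_c)}(\trinorm{p-\pi_h p}_* + \trinorm{y-\pi_h y}_*)$. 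Combining with the triangle inequality and noting $\trinorm{\cdot} \le \trinorm{\cdot}_*$ gives the total error bounded by $C_\dagger (\trinorm{p-\pi_h p}_* + \trinorm{y-\pi_h y}_*)$ with $C_\dagger = 1 + \frac{1}{\min(1,\gamma_0\tau_c)}$.

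The final step is to expand $\trinorm{w-\pi_h w}_*^2 = \beta^\frac12 \|w-\pi_h w\|_{1,\eps,*}^2 + \|w-\pi_h w\|_\LT^2$ using the definition \eqref{eq:hstarnorm} and the projection bounds. The $L_2$ term contributes $O(h^2)$ by \eqref{eq:projesti}; the diffusion piece $\eps\|w-\pi_h w\|_d^2$ contributes $\eps^\frac12 \cdot O(h)$ inside the square root by \eqref{eq:projesti}; and the advection-reaction star-norm piece, after accounting for the extra boundary term $\sum_T \|\bz\|_{0,\infty}\|w-\pi_h w\|_{L_2(\partial T)}^2$ in \eqref{eq:arsnorm} via the trace inequality \eqref{eq:traceinq} and \eqref{eq:projestiar}, contributes the $\|\bz\|_{0,\infty}^\frac12 h^\frac12$ and $\tau_c^{-\frac12}h$ terms. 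Assembling these with the $\beta^\frac14$ prefactor (the square root of $\beta^\frac12$) reproduces the stated factor $\beta^\frac14(\eps^\frac12 + \|\bz\|_{0,\infty}^\frac12 h^\frac12 + \tau_c^{-\frac12}h)h + h^2$ times $(\|p\|_{H^2(\O)} + \|y\|_{H^2(\O)})$.

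I expect the main obstacle to be the careful bookkeeping in the last step: one must verify that the $\|\cdot\|_{ar,*}$ boundary contribution indeed scales like $\|\bz\|_{0,\infty}^\frac12 h^\frac12 \|w\|_{H^2(\O)}$ after applying the scaled trace inequality \eqref{eq:traceinq} with $s=1$ to $w-\pi_h w$, and that every term is grouped so that the common factor $h$ can be factored out cleanly while keeping the $\eps$, $\|\bz\|_{0,\infty}$ and $\tau_c$ dependence explicit and sharp. The abstract skeleton (error splitting, stability, orthogonality, continuity in the star-norm) is routine once \eqref{eq:bhwposed}, \eqref{eq:go} and \eqref{eq:bhcont} are in hand; the only real care is in matching the projection-error powers to the three regimes in \eqref{eq:opesti}.
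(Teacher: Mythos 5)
Your proposal is correct and takes essentially the same route as the paper's proof: the paper likewise applies the discrete stability bound \eqref{eq:bhwposed} to $(p_h-\pi_h p,\, y_h-\pi_h y)$, swaps the argument using Galerkin orthogonality \eqref{eq:go}, invokes the star-norm continuity \eqref{eq:bhcont}, and then converts $\trinorm{p-\pi_h p}_*$ and $\trinorm{y-\pi_h y}_*$ into the stated powers of $h$, $\eps$, $\|\bm{\zeta}\|_{0,\infty}$ and $\tau_c$ via \eqref{eq:traceinq}, \eqref{eq:projesti} and \eqref{eq:projestiar}, finishing with the triangle inequality. The only cosmetic difference is your sign convention on the discrete error $(\pi_h p - p_h)$ versus the paper's $(p_h - \pi_h p)$, which is immaterial.
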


\begin{proof}
    It follows from \eqref{eq:bhwposed}, \eqref{eq:bhcont} and \eqref{eq:go} that, for all $(p_h,y_h)\in V_h\times V_h$,
    \begin{equation}\label{eq:err1}
\begin{aligned}
  &\trinorm{p_h-\pi_hp}+\trinorm{y_h-\pi_hy}\\
  &\lesssim \frac{1}{\min(1,\gamma_0\tau_c)} \sup_{(q,z)\in V_h\times V_h}\frac{\mathcal{B}_h((p_h-\pi_hp,y_h-\pi_hy),(q,z))}{\trinorm{q}+\trinorm{z}}\\
  &=\frac{1}{\min(1,\gamma_0\tau_c)}\sup_{(q,z)\in V_h\times V_h}\frac{\mathcal{B}_h((p-\pi_hp,y-\pi_hy),(q,z))}{\trinorm{q}+\trinorm{z}}\\
  &\lesssim \frac{1}{\min(1,\gamma_0\tau_c)}\big( \trinorm{p-\pi_hp}_*+\trinorm{y-\pi_hy}_*\big).
  \end{aligned}
\end{equation}
We then estimate the term $\trinorm{p-\pi_hp}_*$; an estimate 
 of the other term involving $y$ will follow similarly. Combining \eqref{eq:traceinq},  \eqref{eq:projesti}, \eqref{eq:projestiar} and \eqref{eq:hstarnorm}, we obtain,
\begin{equation}\label{eq:epsapprox}
\begin{aligned}
    \trinorm{p-\pi_hp}^2_*&=\beta^\frac12\Big(\eps\|p-\pi_hp\|^2_d+\|p-\pi_hp\|^2_{ar}+\sum_{T\in\cT_h}\|\bm{\zeta}\|_{0,\infty}\|p-\pi_hp\|^2_{L_2(\partial T)}\Big)\\
    &\quad+\|p-\pi_hp\|^2_\LT\\
    &\lesssim \Big(\beta^\frac12(\eps +\|\bm{\zeta}\|_{0,\infty}h+\tau_c^{-1}h^2)h^2+h^4\Big)\|p\|^2_{H^2(\Omega)}.
\end{aligned}
\end{equation}
It follows from \eqref{eq:err1}, \eqref{eq:epsapprox} and the triangle inequality that
\begin{equation}
\begin{aligned}
     \trinorm{p-p_h}&+\trinorm{y-y_h}\\
    &\lesssim C_\dagger\Big(\beta^\frac14(\eps^\frac12+\|\bm{\zeta}\|^{\frac12}_{0,\infty}h^\frac12+\tau_c^{-\frac12}h)h+h^2\Big)(\|p\|_{H^2(\O)}+\|y\|_{H^2(\O)}).
\end{aligned}
\end{equation}
\end{proof}

\begin{remark}\label{remark:dgconv}
     Theorem \ref{thm:dgesti} indicates that our DG methods are optimal in the following sense,
    \begin{equation}
          \|p-p_h\|_{1,\eps}+\|y-y_h\|_{1,\eps}\le\left\{\begin{array}{ll}
              O(\beta^\frac14h+h^2)\quad&\text{if}\ \eqref{eq:stateeq}\ \text{is diffusion-dominated,}\\
              \\
              O(\beta^\frac14h^\frac32+h^2)\quad&\text{if}\ \eqref{eq:stateeq}\ \text{is convection-dominated,}\\
              \\
              O(\beta^\frac14h^2+h^2)\quad&\text{if}\ \eqref{eq:stateeq}\ \text{is reaction-dominated.}
          \end{array}\right.
\end{equation}
Note that $\|p\|_{H^2(\O)}=O(\eps^{-\frac32})$ and $\|y\|_{H^2(\O)}=O(\eps^{-\frac32})$ (\cite{leykekhman2012local}), hence, the estimate \eqref{eq:epsapprox} is not informative when $\eps\le h$. More delicate interior error estimates that stay away from the boundary layers and interior layers for standard DG methods can be found in \cite{leykekhman2012local}.
\end{remark}

\begin{remark}
    The constant $C_\dagger$ in Theorem \ref{thm:dgesti} can be bounded independently of $\gamma$ and $\bm{\zeta}$ due to assumption \eqref{eq:advassump}. The purpose of keeping the constant is to track how the data of the state equation enters the estimate \eqref{eq:dgesti}.
\end{remark}

\section{A Robust Multigrid Preconditioner}\label{sec:mgprecon}

In this section, we discuss block structured multigrid preconditioners to solve the discrete problem \eqref{eq:dgprob}. Our experimental results illustrate their robustness. 
Let the triangulation $\mathcal{T}_1, \mathcal{T}_2, ...$ be generated from the triangulation $\mathcal{T}_0$ through uniform subdivisions such that $h_k\approx\frac12 h_{k-1}$ and $V_k$ be the DG space associated with $\mathcal{T}_k$. Let $\bM_k$ (resp., $\bA_k$) denote the mass matrix representing the bilinear form $(\cdot,\cdot)_\LT$ (resp., $a_h(\cdot,\cdot$)) with respect to the natural discontinuous nodal basis in $V_k$. The discrete problem can be written in the following form,
\begin{equation}\label{eq:mform}
\begin{pmatrix}
    \bM_k & \beta^\frac12\bA_k\\
    \beta^\frac12\bA_k^t & -\bM_k
\end{pmatrix}
\begin{pmatrix}
        \bm{p}\\
        \bm{y}
    \end{pmatrix}=
    \begin{pmatrix}
            \bm{f}\\
            \bm{g}
        \end{pmatrix}.    
\end{equation}
Let $\bm{\mathcal{B}}_k=\begin{pmatrix}
   \bM_k & \beta^\frac12\bA_k\\
    \beta^\frac12\bA_k^t & -\bM_k
\end{pmatrix}$. It has been shown in \cite{pearson2011fast} that the following preconditioner based on the Schur complement is efficient for the problem \eqref{eq:mform},
\begin{equation}\label{eq:preconf}
    \bm{\mathcal{P}}_k=\begin{pmatrix}
        \bM_k & \\
        & \bM_k+\beta\bA_k^t\bM_k^{-1}\bA_k
    \end{pmatrix}.
\end{equation}
In particular, it has been noticed that the eigenvalues of $\bm{\mathcal{P}}_k^{-1}\bm{\mathcal{B}}_k$ are $\{\frac{1-\sqrt{5}}{2},1,\frac{1+\sqrt{5}}{2}\}$. A good approximation of $\bm{\mathcal{P}}_k$ is the following preconditioner (cf. \cite[Theorem 1]{pearson2011fast}),
\begin{equation}\label{eq:precon}
    \widetilde{\bm{\mathcal{P}}}_k=\begin{pmatrix}
        \bM_k & \\
        & (\beta^\frac12\bA_k+\bM_k)^t\bM_k^{-1}(\beta^\frac12\bA_k+\bM_k)
    \end{pmatrix}.
\end{equation}
 First in \cite{pearson2011fast} and  later by other authors in \cite{mardal2022robust}, these preconditioners were used to solve the problem \eqref{optcon}-\eqref{eq:stateeq}. However, they both needed to approximate the mass matrix $\bM_k$ using specific techniques. In our case, the inverse of $\bM_k$ is trivial since the mass matrix for DG methods is block diagonal. For the Schur complement, one has to either efficiently approximate $(\bM_k+\beta\bA^t_h\bM_k^{-1}\bA_k)^{-1}$ or $(\beta^\frac12\bA_k+\bM_k)^{-1}$. The former was accomplished by using isogeometric analysis \cite{mardal2022robust} and the latter can be realized by multigrid \cite{pearson2011fast}. Here we adopt the multigrid strategy proposed in \cite{gopalakrishnan2003multilevel} to efficiently approximate the Schur complement. Note that approximating 
 $(\beta^\frac12\bA_k+\bM_k)^{-1}$ is equivalent to approximately {solving} a single diffusion-convection-reaction equation.

{The quality of the approximate preconditioner can be measured in terms of the distance from the ideal preconditioner, corresponding to the Schur complement $\bS_k=\bM_k+\beta\bA_k^t\bM_k^{-1}\bA_k$. This distance is given by the spectral equivalence between the two matrices; see, e.g., \cite{Faber.Manteuffel.Parter.90}.  In \cite[Theorem 4.1]{pearson2011fast} it was shown
that if 
$$
\widehat \bS_k :=  (\beta^\frac12\bA_k+\bM_k)^t\bM_k^{-1}(\beta^\frac12\bA_k+\bM_k)
$$ 
is used in place of $\bS_k$, 
then the eigenvalues of $\widehat \bS_k^{-1}\bS_k$ are contained in the small interval $[\frac 1 2, 1]$, independently of the problem parameters. This estimate can be approximated in case the exact diagonal block $\widehat \bS_k$ is in turn approximated as $$
\widetilde \bS_k := 
\widetilde \bP_k^t \bM_k^{-1} \widetilde\bP_k, 
$$
where
$\widetilde\bP_k$ is the multigrid operator for $\bP_k=\beta^\frac12\bA_k+\bM_k$. 
Indeed, the eigenvalues of $\widetilde \bS_k^{-1}\bS_k $ can be analyzed by writing the corresponding Rayleigh quotient as follows
$$
\frac{\bv^t \bS_k \bv}{\bv^t \widetilde\bS_k \bv}=
\frac{\bv^t \bS_k \bv}{\bv^t \widehat\bS_k \bv}
\,\,
\frac{\bv^t \widehat\bS_k \bv}{\bv^t \widetilde\bS_k \bv} .
$$
The second factor yields
\begin{eqnarray*}
\frac{\bv^t \widehat\bS_k \bv}{\bv^t \widetilde\bS_k \bv}&=&
\frac{\bv^t \bP_k^t \bM_k^{-1}\bP_k \bv}{\bv^t \widetilde\bP_k^t \bM_k^{-1}\widetilde\bP_k \bv} 
= \frac{\bu^t (\bP_k\widetilde\bP_k^{-1})^t \bM_k^{-1}\bP_k \widetilde\bP_k^{-1}\bu}{\bu^t \bM_k^{-1}\bu},
\end{eqnarray*}
where $\bu =\widetilde\bP_k \bv$, and
$$
\sigma_{\min}(\bP_k\widetilde\bP_k^{-1})^2 
\frac{1}{{\rm cond}(\bM_k)}\le \frac{\bu^t (\bP_k\widetilde\bP_k^{-1})^t \bM_k^{-1}\bP_k \widetilde\bP_k^{-1}\bu}{\bu^t \bM_k^{-1}\bu}
\le
\sigma_{\max}(\bP_k\widetilde\bP_k^{-1})^2 
{{\rm cond}(\bM_k)}.
$$
Here $\sigma_{\min}(\cdot), \sigma_{\max}(\cdot)$ are the minimum and maximum singular values of the argument matrix, and {\rm cond}$(\cdot)$ is the spectral condition number of its argument. We recall that the condition number of $\bM_k$ remains very moderate, independently of the problem parameters.
In summary, we have obtained the following estimates
for the Rayleigh quotient associated with 
$\bS_k \widetilde\bS_k^{-1}$
and any nonzero vector $\bv$,
$$
\frac 1 2 \frac {\sigma_{\min}^2}{{\rm cond}(\bM_k)}
\le \frac{\bv^t \bS_k \bv}{\bv^t \widetilde\bS_k \bv}
\le
 {\sigma_{\max}^2}\cdot {{\rm cond}(\bM_k)};
$$
(a short-hand notation is used for the singular values).
The lower and upper bounds show that the quality of the multigrid operator in approximating the spectral properties of the convection-diffusion operator plays 
 a crucial role for the spectral properties of the whole  preconditioned system.
 Our extensive computational experimentation, some of which is reported below,  seems to show that the designed multigrid operator achieves the goal of making these bounds parameter independent. A rigorous proof remains  an important and challenging open  problem.
}

\subsection{Downwind ordering}\label{sec:dwordering}

It is well-known that reordering the unknowns is crucial for convection-dominated problems. For continuous Galerkin (CG) methods, we refer to \cite{wang1999crosswind,bey1997downwind,hackbusch1997downwind} for more details. For DG methods, it was pointed out in \cite{gopalakrishnan2003multilevel} that downwind ordering of the elements makes the matrix representing the convection term block triangular. We briefly describe an algorithm to order the elements following the convection direction. First, we have the following definitions \cite{lesaint1974finite}.

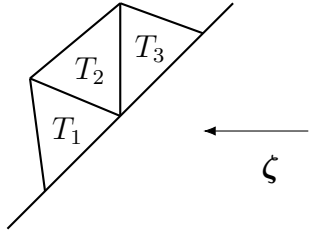
\begin{figure}[h]
\centering
\begin{tikzpicture}
\draw[thick] plot coordinates {(0,0) (3,3)};
\draw[thick] plot coordinates {(0.5,0.5) (0.3,2)};
\draw[thick] plot coordinates {(0.3,2) (1.5,1.5)};
\draw[thick] plot coordinates {(1.5,1.5) (1.5,3)};
\draw[thick] plot coordinates {(1.5,3) (2.6,2.6)};
\draw[thick] plot coordinates {(1.5,3) (0.3,2)};

 \node at (0.8,1.3) {$T_1$};
 \node at (1.1,2.1) {$T_2$};
 \node at (1.9,2.4) {$T_3$};
 \draw[-Latex](4,1.3) -- (2.6,1.3);
 \node at (3.5,0.8) {$\bm{\zeta}$};
\end{tikzpicture}
\caption{Boundary and semi-boundary elements} \label{figure:bdelem}
\end{figure}

\begin{definition}[Boundary elements]
    An element $T\in\cT_h$ is a boundary element if and only if at least one of the edges of $T$ belongs to $\partial \O$.
\end{definition}

\begin{definition}[Semi-boundary elements]
    An element $T\in\cT_h$ is a semi-boundary element if and only if one of the vertices of $T$ belongs to $\partial\O$.
\end{definition}

Now we describe the downwind ordering algorithm as follows in Algorithm \ref{algo:dwordering}.

\begin{algorithm}
\caption{Downwind ordering for DG methods} \label{algo:dwordering}
\begin{algorithmic}[1]
\STATE Find all the boundary elements on the inflow boundary $\cE_h^{b,-}$ and all the semi-boundary elements that have vertices on the inflow boundary $\cE_h^{b,-}$. Denote them as $\{T_i\}_{i=1}^{N}$.
\STATE Reorder all the elements gathered in Step 1 such that the outflow boundary of $T_i$ is the inflow boundary of $T_{j}$ for $i<j$ if $T_i\cap T_j\ne\varnothing$.
\STATE Exclude the elements $\{T_i\}_{i=1}^{N}$ from $\cT_h$ and repeat the process.
\end{algorithmic}
\end{algorithm}


\begin{example}
    In Figure \ref{figure:bdelem}, the elements $T_1$ and $T_3$ are boundary elements while $T_2$ is a semi-boundary element. Since the convection field $\bm{\zeta}$ flows from right to left, the downwind ordering of the elements is $T_3, T_2, T_1$.
\end{example}

\subsection{Multigrid methods for diffusion-convection-reaction equations}\label{sec:mgdcr}

The design of multigrid methods for the diffusion-convection-reaction equations, especially in the convection-dominated regime, is not trivial. Usual components of multigrid would not work well for this problem. This was investigated extensively in \cite{wu2006analysis,olshanskii2004convergence,gopalakrishnan2003multilevel,kim2004uniformly,elman2014finite}. One has to either use a specially designed smoothing step or reorder the unknowns following the flow direction. One key observation is that the smoother used in the multigrid should work for the case when $\eps=0$, i.e, the pure hyperbolic case \cite{olshanskii2004convergence}. Let us consider the state equation \eqref{eq:stateeq} and the corresponding discrete problem at the $k$th level,
\begin{equation}
    \bA_k\bm{w}=\bm{f},
\end{equation}
where $\bA_k$ is the matrix represents $a_h(\cdot,\cdot)$ at the $k$th level.
The following algorithm describes a $V$-cycle algorithm with the forward block Gauss-Seidel smoother $\bm{G}_k$ using downwind ordering in Algorithm \ref{algo:dwordering}. Here $\bm{I}_k^{k-1}$ and $\bm{I}_{k-1}^{k}$ represent standard fine-to-coarse and coarse-to-fine operators respectively. Note that with downwind ordering, the matrix representing the convection term becomes block lower triangular, hence the forward block Gauss-Seidel smoother is efficient \cite{lesaint1974finite,gopalakrishnan2003multilevel}. In the case of linear polynomials, where the diagonal block is $3\times 3$, computing $\bm{G}_k$ is highly efficient.

\begin{algorithm}
\caption{$V$-cycle algorithm for convection-dominated problem with downwind ordering with respect to $\bm{\zeta}$, $MG(k,\bm{f},\bm{u}_0,m_1,m_2)$} \label{algo:dwmg}
\begin{algorithmic}[1]
\STATE Given initial guess $\bm{u}_0$ and $\bm{f}$. 
\STATE If $k=0$, let $\bm{u}=\bA_k^{-1}\bm{f}$, otherwise do the following,
\STATE Pre-smoothing: For $i=1$ to $m_1$,
\STATE \quad $\bm{u}_i=\bm{u}_{i-1}+\bm{G}_k(\bm{f}-\bA_k\bm{u}_{i-1})$.
\STATE Compute $\bm{r}=\bm{I}_k^{k-1}(\bm{f}-\bA_k\bm{u}_{m_1})$. 
\STATE Set $\bm{r}=MG(k-1,\bm{f},\bm{r},m_1,m_2)$.
\STATE Compute $\bm{u}_{m_1+1}=\bm{u}_{m_1}+\bm{I}_{k-1}^k\bm{r}$.
\STATE Post-smoothing: For $i=m_1+2$ to $m_1+m_2+1$,
\STATE \quad $\bm{u}_i=\bm{u}_{i-1}+\bm{G}_k(\bm{f}-\bA_k\bm{u}_{i-1})$.
\end{algorithmic}
\end{algorithm}

For the dual problem \eqref{eq:ospdual}, the downwind ordering with respect to $-\bm{\zeta}$ makes the convection matrix block lower triangular. Hence, the forward block Gauss-Seidel smoother is also efficient for the dual problem.

\begin{remark}
    We do not need to reorder the elements according to $-\bm{\zeta}$ again. Once we have the downwind ordering $\{T_i\}_{i=1}^N$ with respect to $\bm{\zeta}$, the downwind oredering with respect to $-\bm{\zeta}$ is $\{T_i\}_{i=N}^1$. We can then utilize this ordering to solve the dual problem efficiently.
\end{remark}

\subsection{Efficient implementation of the preconditioner \eqref{eq:precon}}

Combining the downwind ordering in Section \ref{sec:dwordering} and the efficient multigrid methods in Section \ref{sec:mgdcr}, we can compute the preconditioner \eqref{eq:precon} efficiently as follows in Algorithm \ref{algo:precon}.

\begin{algorithm}
\caption{Efficient computation of the preconditioner \eqref{eq:precon}} \label{algo:precon}
\begin{algorithmic}[1]
\STATE Given 
$\begin{pmatrix}
    \bm{w}\\
    \bm{v}
\end{pmatrix}$.
\STATE Compute $\bm{w}_1=\bM_k^{-1}\bm{w}$. This step is exact since $\bM_k$ is block-diagonal.
\STATE Compute $\bm{v}_1=MG(k,\bm{v},0,m_1, m_2)$ (apply to $(\beta^\frac12\bA_k+\bM_k)^t$).\label{algo:mgat}
\STATE Compute $\bm{v}_2=\bM_k\bm{v}_1$.
\STATE Compute $\bm{v}_3=MG(k,\bm{v}_2,0,m_1, m_2)$ (apply to $(\beta^\frac12\bA_k+\bM_k)$).\label{algo:mga}
\STATE Output $\widetilde{\bm{\mathcal{P}}}_h^{-1}\begin{pmatrix}
    \bm{w}\\
    \bm{v}
\end{pmatrix}$=$\begin{pmatrix}
    \bm{w}_1\\
    \bm{v}_3
\end{pmatrix}$.
\end{algorithmic}
\end{algorithm}

\begin{remark}\label{remark:tinybgs}
When $\eps$ is tiny, one forward block Gauss-Seidel sweep is enough for Step \ref{algo:mgat} and Step \ref{algo:mga}. Indeed, for the pure hyperbolic case, forward block Gauss-Seidel iteration is an exact solver \cite{gopalakrishnan2003multilevel}. Therefore, for strongly convection-dominated case, Algorithm \ref{algo:precon} is extremely efficient.    
\end{remark}


\section{Numerical Results}\label{sec:numerics}

In this section, we show numerical experiments of the DG methods \eqref{eq:dgprob} and the corresponding preconditioner introduced in the previous section. We solve the discrete problem \eqref{eq:dgprob} using MINRES preconditioned by $\widetilde{\bm{\mathcal{P}}}_k$ defined in \eqref{eq:precon} with tolerance $10^{-6}$.  We use the built-in \verb|minres| function in MATLAB to solve the discrete problem. Steps \ref{algo:mgat} and \ref{algo:mga} in Algorithm \ref{algo:precon} are computed by a single $V$-cycle multigrid method described in Algorithm \ref{algo:dwmg} with $8$ pre-smoothing and post-smoothing steps. To broaden  our comparisons, we  have also used an ILU-preconditioned BiCGSTAB($\ell$) algorithm (default code in Matlab) with tolerance $10^{-8}$ to compute Steps \ref{algo:mgat} and \ref{algo:mga} in Algorithm \ref{algo:precon} as well. In  addition,  for $\eps=10^{-6}$ and $10^{-9}$, we also compute Step \ref{algo:mgat} and Step \ref{algo:mga} in Algorithm \ref{algo:precon} with only one step of backward block Gauss-Seidel iteration and one step of forward block Gauss-Seidel iteration respectively.

We then include the convergence results in the convection-dominated regime to justify our main theorem. We denote $e_y=y-y_h$ and $e_p=p-p_h$ in this section, where $y$, $p$ are solutions to \eqref{eq:regu} and $p_h$, $y_h$ are solutions to the discrete problem \eqref{eq:dgprob}. We compute the global convergence rates of the state and the adjoint state in $L_2$ and $\|\cdot\|_{1,\eps}$ norms. We also compute the local convergence rates of the state and the adjoint state in $L_2$ and $\|\cdot\|_{H^1(\cT_h)}$ norms. Here, the norm $\|\cdot\|_{H^1(\cT_h)}$ is defined as $\|\cdot\|^2_{H^1(\cT_h)}:=\sum_{T\in\cT_h}\|\nabla\cdot\|^2_{H^1(T)}$. We then illustrate the efficiency of our preconditioner by showing the numbers of iteration for the preconditioned MINRES algorithm.

\begin{center}
\begin{table}
\caption{Example \ref{ex:mgcontract}. Contraction numbers of multigrid methods for Step \ref{algo:mga} in Algorithm \ref{algo:dwmg} with $\beta=1$ and different $\eps$} \label{table:contraa}
\begin{tabular}{cccc|ccc}\hline
\multirow{3}{*}{$k$}&\multicolumn{3}{c}{$\eps=10^{-1}$}&\multicolumn{3}{c}{$\eps=10^{-3}$}\\
\cline{2-7}
&\multicolumn{3}{c}{$m$}&\multicolumn{3}{c}{$m$}\\
&$2$&$4$&$8$&$2$&$4$&$8$\\
\hline
$1$\ \vline&9.27e-02&1.55e-02&4.53e-04&1.53e-07&3.12e-14&1.69e-16\\
$2$\ \vline&1.73e-01&4.94e-02&5.84e-03&3.27e-05&6.45e-07&5.16e-16\\
$3$\ \vline&2.55e-01&1.37e-01&4.61e-02&9.28e-04&1.67e-06&1.04e-12\\
$4$\ \vline&3.19e-01&1.77e-01&9.04e-02&1.55e-02&1.72e-04&3.04e-08\\
$5$\ \vline&3.63e-01&2.21e-01&1.19e-01&1.11e-01&8.44e-03&3.88e-05\\
$6$\ \vline&3.81e-01&2.39e-01&1.32e-01&3.17e-01&1.00e-01&1.25e-02\\
$7$\ \vline&3.98e-01&2.43e-01&1.41e-01&3.37e-01&1.72e-01&5.68e-02\\
\hline
\multirow{3}{*}{$k$}&\multicolumn{3}{c}{$\eps=10^{-6}$}&\multicolumn{3}{c}{$\eps=10^{-9}$}\\
\cline{2-7}
&\multicolumn{3}{c}{$m$}&\multicolumn{3}{c}{$m$}\\
&$2$&$4$&$8$&$2$&$4$&$8$\\
\hline
$1$\ \vline&2.44e-16&2.77e-16&2.83e-16&3.41e-16&2.98e-16&1.97e-16\\
$2$\ \vline&4.85e-16&7.84e-16&4.82e-16&3.88e-16&3.60e-16&2.53e-16\\
$3$\ \vline&6.68e-15&5.38e-16&5.66e-16&6.38e-16&5.34e-16&5.32e-16\\
$4$\ \vline&1.68e-12&9.73e-16&1.05e-15&9.57e-16&1.24e-15&1.15e-15\\
$5$\ \vline&3.80e-06&1.46e-15&1.73e-15&1.35e-15&1.63e-15&1.54e-15\\
$6$\ \vline&3.18e-08&1.83e-15&1.86e-15&1.95e-15&2.16e-15&2.23e-15\\
$7$\ \vline&2.74e-06&1.90e-11&2.69e-15&2.94e-15&2.89e-15&2.84e-15\\
\hline
\end{tabular}
\end{table}
\end{center}

 \begin{example}[Multigrid Methods for Convection-dominated Problems]\label{ex:mgcontract}
     In this example, we first illustrate the contraction behaviors of the multigrid methods described in Algorithm \ref{algo:dwmg}. Note that Algorithm \ref{algo:dwmg} is a crucial component of the preconditioner described in Algorithm \ref{algo:precon}. We compute the contraction numbers of the multigrid methods for both the forward problem (Step \ref{algo:mga} in Algorithm \ref{algo:precon}) and the dual problem (Step \ref{algo:mgat} in Algorithm \ref{algo:precon}) with $m$ smoothing steps.
 \end{example}

We first consider the case with different values of $\eps$ where $\beta=1$. As one can see from Tables \ref{table:contraa} and \ref{table:contraat}, our multigrid methods are highly efficient in convection-dominated regime, especially in the cases where $\eps=10^{-6}$ and $\eps=10^{-9}$. Indeed, as pointed out in \cite{gopalakrishnan2003multilevel}, with downwind ordering, the block Gauss-Seidel iteration itself is almost a direct solver in these cases. For mild convection-dominated cases, where $\eps=10^{-3}$, one can see our multigrid methods also perform well. For the case $\eps=10^{-1}$, the convergence behavior of the multigrid methods tends to the classical $O(m^{-1})$ convergence rate as in the diffusion-dominated case. 
Overall, this example shows that, with downwind ordering, the multigrid methods with a block Gauss-Seidel smoother are extremely suitable for convection-dominated problems.

We then report the contraction numbers in Table \ref{table:contraabeta} with different values of $\beta$ where $\eps=10^{-3}$. For simplicity, we only include the results at higher levels for Step \ref{algo:mga} in Algorithm \ref{algo:precon}. One can clearly see that the contraction numbers for Algorithm \ref{algo:dwmg} are small for all $\beta$ values, and they decrease when $\beta$ decreases. This is because the block Gauss-Seidel algorithm tends to an exact solver with any ordering as $\beta\rightarrow0$, due to the fact that $\bm{M}_k$ is block-diagonal.

\begin{center}
\begin{table}
\caption{Example \ref{ex:mgcontract}. Contraction numbers of multigrid methods for Step \ref{algo:mgat} in Algorithm \ref{algo:dwmg} with $\beta=1$ and different $\eps$}\label{table:contraat}
\begin{tabular}{cccc|ccc}\hline
\multirow{3}{*}{$k$}&\multicolumn{3}{c}{$\eps=10^{-1}$}&\multicolumn{3}{c}{$\eps=10^{-3}$}\\
\cline{2-7}
&\multicolumn{3}{c}{$m$}&\multicolumn{3}{c}{$m$}\\
&$2$&$4$&$8$&$2$&$4$&$8$\\
\hline
$1$\ \vline&9.27e-02&1.55e-02&4.53e-04&1.08e-07&3.49e-14&3.59e-16\\
$2$\ \vline&1.75e-01&4.18e-02&7.65e-03&9.37e-06&5.39e-07&4.45e-16\\
$3$\ \vline&2.48e-01&1.19e-01&4.51e-02&1.21e-03&1.05e-06&1.09e-11\\
$4$\ \vline&3.25e-01&1.80e-01&8.02e-02&1.70e-02&1.85e-04&2.97e-08\\
$5$\ \vline&3.68e-01&2.28e-01&1.16e-01&9.06e-02&6.12e-03&4.42e-05\\
$6$\ \vline&3.82e-01&2.41e-01&1.38e-01&2.44e-01&6.08e-02&8.76e-03\\
$7$\ \vline&3.99e-01&2.47e-01&1.40e-01&3.35e-01&1.82e-01&6.35e-02\\
\hline
\multirow{3}{*}{$k$}&\multicolumn{3}{c}{$\eps=10^{-6}$}&\multicolumn{3}{c}{$\eps=10^{-9}$}\\
\cline{2-7}
&\multicolumn{3}{c}{$m$}&\multicolumn{3}{c}{$m$}\\
&$2$&$4$&$8$&$2$&$4$&$8$\\
\hline
$1$\ \vline&2.06e-16&2.13e-16&2.42e-16&2.62e-16&2.45e-16&4.18e-16\\
$2$\ \vline&5.21e-16&3.77e-16&2.70e-16&5.89e-16&4.39e-16&5.41e-16\\
$3$\ \vline&9.52e-15&8.57e-16&6.50e-16&6.54e-16&8.76e-16&9.90e-16\\
$4$\ \vline&1.93e-12&9.85e-16&1.20e-15&9.74e-16&1.05e-15&1.12e-15\\
$5$\ \vline&4.18e-06&1.35e-15&1.31e-15&1.55e-15&1.41e-15&1.46e-15\\
$6$\ \vline&2.90e-08&2.11e-15&1.85e-15&2.17e-15&2.05e-15&1.99e-15\\
$7$\ \vline&2.72e-06&2.08e-11&2.57e-15&3.29e-15&2.83e-15&3.28e-15\\
\hline
\end{tabular}
\end{table}
\end{center}

\begin{center}
\begin{table}
\caption{Example \ref{ex:mgcontract}. Contraction numbers of multigrid methods for Step \ref{algo:mga} in Algorithm \ref{algo:dwmg} with $\eps=10^{-3}$ and different values of $\beta$}\label{table:contraabeta}
\begin{tabular}{cccc|ccc}\hline
\multirow{3}{*}{$k$}&\multicolumn{3}{c}{$\beta=10^{-1}$}&\multicolumn{3}{c}{$\beta=10^{-2}$}\\
\cline{2-7}
&\multicolumn{3}{c}{$m$}&\multicolumn{3}{c}{$m$}\\
&$2$&$4$&$8$&$2$&$4$&$8$\\
\hline
$5$\ \vline&1.07e-01&7.61e-03&2.79e-05&6.44e-02&4.00e-03&2.01e-05\\
$6$\ \vline&2.61e-01&1.10e-01&9.25e-03&1.80e-01&5.80e-02&7.75e-03\\
$7$\ \vline&3.22e-01&1.72e-01&5.75e-02&2.81e-01&1.40e-01&3.57e-02\\
\hline
\multirow{3}{*}{$k$}&\multicolumn{3}{c}{$\beta=10^{-4}$}&\multicolumn{3}{c}{$\beta=10^{-8}$}\\
\cline{2-7}
&\multicolumn{3}{c}{$m$}&\multicolumn{3}{c}{$m$}\\
&$2$&$4$&$8$&$2$&$4$&$8$\\
\hline
$5$\ \vline&5.84e-04&2.73e-07&1.42e-14&4.50e-11&4.98e-16&4.50e-16\\
$6$\ \vline&2.08e-02&4.02e-04&1.16e-07&4.91e-08&5.32e-16&4.59e-16\\
$7$\ \vline&9.50e-02&1.50e-02&5.73e-04&3.98e-07&7.27e-13&4.56e-16\\
\hline
\end{tabular}
\end{table}
\end{center}

\begin{example}[Smooth Solutions]\label{ex:smsol}
    In this example, we take $\Omega=[0,1]^2$, $\gamma=0$, $\bm{\zeta}=[1,0]^t$ and let the exact solutions of \eqref{eq:regu} be 
    \begin{equation}
        y=x_1(1-x_1)x_2(1-x_2)\quad\text{and}\quad p=\sin(2\pi x_1)\sin(2\pi x_2).
    \end{equation}
    We take $\beta=1$ unless otherwise stated.
\end{example}

We first report the global convergence results of the methods \eqref{eq:dgprob} with $\eps=10^{-9}$ in Table \ref{table:smsolg}. We observe $O(h^2)$ convergence for $\|e_y\|_\LT$ and $\|e_p\|_\LT$. They are better than the theoretical results in Theorem \ref{thm:dgesti}, which is due to the smoothness of the solutions. Similar convergence behaviors were also observed in \cite{ayuso2009discontinuous}. We also observe almost $O(h^2)$ convergence for $\|e_y\|_{1,\eps}$ and $O(h^\frac32)$ convergence for $\|e_p\|_{1,\eps}$. Again, due to the smoothness of the solutions, we see higher convergence rates in $\|e_y\|_{1,\eps}$. We also test and report the local convergence results with $\eps=10^{-9}$ in Table \ref{table:smsoll}. Here we measure the $L_2$ and $\|\cdot\|_{H^1(\cT_h)}$ errors in the domain $[0.25, 0.75]^2$. One can clearly see optimal convergence rates in $L_2$ and $\|\cdot\|_{H^1(\cT_h)}$ norms for both variables. This is consistent with the results in \cite{leykekhman2012local}.

We then show the MINRES numbers of iterations in Table \ref{table:smminres1} for various $\eps$ and different implementations of the preconditioner. We clearly see that the preconditioner \eqref{eq:precon} is robust with respect to $\eps$. Moreover, the performance of the multigrid implementation of the preconditioner matches with the behavior of the contraction numbers in Example \ref{ex:mgcontract}. Indeed, for $\eps=10^{-6}$ and $\eps=10^{-9}$, the multigrid method is almost an exact solver, hence the MINRES numbers of iterations are identical to those of BiCGSTAB. For $\eps=10^{-3}$ and $\eps=10^{-1}$, the MINRES numbers of iterations are still bounded with respect to $k$ which is consistent with the results in Example~\ref{ex:mgcontract}. We also see that for $\eps=10^{-6}$ and $\eps=10^{-9}$, one sweep of backward and forward block Gauss-Seidel is enough (see Remark \ref{remark:tinybgs}).

Lastly, we report the MINRES numbers of iterations in Table \ref{table:smminresb} for various $\beta$ and different implementations of the preconditioner. We take $\eps=10^{-3}$ in Table \ref{table:smminresb}. One can see that the preconditioner is robust with respect to $\beta$ as well.

\begin{table}[htb]
\caption{Convergence rates for Example \ref{ex:smsol} with $\varepsilon=10^{-9}$ (Global)}\label{table:smsolg}
\begin{tabular}{ccccccccc}\hline
\\
$k$&$\|e_y\|_\LT$&Order&$\|e_y\|_{1,\eps}$&Order&$\|e_p\|_\LT$&Order&$\|e_p\|_{1,\eps}$&Order\\
\\
\hline
$1$&1.11e-02&-&1.51e-02&-&5.25e-03&-&5.36e-03&-\\
$2$&5.55e-02&-2.32&5.76e-02&-1.93&1.42e-01&-4.76&2.51e-01&-5.55\\
$3$&1.48e-02&1.90&1.52e-02&1.92&3.58e-02&1.99&9.97e-02&1.33\\
$4$&3.78e-03&1.97&3.87e-03&1.98&8.92e-03&2.00&3.63e-02&1.46\\
$5$&9.50e-04&1.99&9.81e-04&1.98&2.23e-03&2.00&1.29e-02&1.49\\
$6$&2.38e-04&2.00&2.52e-04&1.96&5.57e-04&2.00&4.56e-03&1.50\\
$7$&5.94e-05&2.00&6.60e-05&1.93&1.39e-04&2.00&1.61e-03&1.50\\
$8$&1.49e-05&2.00&1.80e-05&1.88&3.48e-05&2.00&5.68e-04&1.50\\
\hline
\end{tabular}
\end{table}

\begin{table}[htb]
\caption{Convergence rates for Example \ref{ex:smsol} with $\varepsilon=10^{-9}$ (Local)}\label{table:smsoll}
\begin{tabular}{ccccccccc}\hline
\\
$k$&$\|e_y\|_\LT$&Order&$\|e_y\|_{H^1(\cT_h)}$&Order&$\|e_p\|_\LT$&Order&$\|e_p\|_{H^1(\cT_h)}$&Order\\
\\
\hline
$1$&2.95e-03&-&1.48e-02&-&1.64e-03&-&8.88e-03&-\\
$2$&1.01e-02&-1.77&1.13e-01&-2.94&3.60e-02&-4.45&4.60e-01&-5.69\\
$3$&5.16e-03&0.96&4.71e-02&1.26&1.51e-02&1.25&3.76e-01&0.29\\
$4$&1.52e-03&1.77&1.80e-02&1.39&4.09e-03&1.89&2.05e-01&0.88\\
$5$&4.01e-04&1.92&7.44e-03&1.27&1.05e-03&1.96&1.06e-01&0.96\\
$6$&1.03e-04&1.96&3.31e-03&1.17&2.67e-04&1.98&5.35e-02&0.98\\
$7$&2.60e-05&1.98&1.55e-03&1.10&6.72e-05&1.99&2.69e-02&0.99\\
$8$&6.54e-06&1.99&7.47e-04&1.05&1.69e-05&1.99&1.35e-02&0.99\\
\hline
\end{tabular}
\end{table}

\begin{table}[htb]
\caption{MINRES numbers of iterations for Example \ref{ex:smsol}}\label{table:smminres1}
\begin{tabular}{cllll|ll|llll}
&\multicolumn{4}{c}{MG}&\multicolumn{2}{c}{BGS}&\multicolumn{4}{c}{BiCGSTAB}\\
\hline
\multirow{2}{*}{$k$}&\multicolumn{10}{c}{$\eps$}\\
\cline{2-11}\\[-2ex]
&$10^{-1}$&$10^{-3}$&$10^{-6}$&$10^{-9}$&$10^{-6}$&$10^{-9}$&$10^{-1}$&$10^{-3}$&$10^{-6}$&$10^{-9}$\\
\hline
$1$\ \vline&12&15&14&14&14&14&12&15&14&14\\
$2$\ \vline&14&15&14&14&14&14&14&15&14&14\\
$3$\ \vline&16&17&14&14&14&14&14&17&14&14\\
$4$\ \vline&18&17&13&13&13&13&14&17&13&13\\
$5$\ \vline&19&17&13&11&13&11&14&17&13&11\\
$6$\ \vline&20&17&13&11&15&11&14&17&13&11\\
$7$\ \vline&22&19&15&11&20&11&15&17&15&11\\
\hline
\end{tabular}
\end{table}

\begin{table}[htb]
\caption{MINRES numbers of iterations for Example \ref{ex:smsol} with different values of $\beta$ and $\eps=10^{-3}$}\label{table:smminresb}
\begin{tabular}{cllll|llll}
&\multicolumn{4}{c}{MG}&\multicolumn{4}{c}{BiCGSTAB}\\
\hline
\multirow{2}{*}{$k$}&\multicolumn{8}{c}{$\beta$}\\
\cline{2-9}\\[-2ex]
&$10^{-1}$&$10^{-2}$&$10^{-4}$&$10^{-8}$&$10^{-1}$&$10^{-2}$&$10^{-4}$&$10^{-8}$\\
\hline
$1$\ \vline&17&19&8&3&17&19&8&3\\
$2$\ \vline&17&22&12&4&17&22&12&4\\
$3$\ \vline&18&20&14&4&18&20&14&4\\
$4$\ \vline&17&20&17&4&17&20&17&4\\
$5$\ \vline&17&19&18&4&17&19&18&4\\
$6$\ \vline&16&19&19&4&16&19&19&4\\
$7$\ \vline&17&21&19&5&16&20&19&5\\
\hline
\end{tabular}
\end{table}

\begin{example}[Boundary Layer]\label{ex:bdlayer}
     In this example, we take $\Omega=[0,1]^2$, $\beta=1$, $\gamma=0$, $\bm{\zeta}=[\sqrt{2}/2,\sqrt{2}/2]^t$ and let the exact solutions of \eqref{eq:regu} be $y=\eta(x)\eta(y)$ and $p=\eta(1-x)\eta(1-y)$, where
\begin{equation}
  \eta(z)=z^3-\frac{e^{\frac{z-1}{\varepsilon}}-e^{-1/\varepsilon}}{1-e^{-1/\varepsilon}}.
\end{equation}
It is known \cite{leykekhman2012local} that the solution $y$ has a boundary layer near $x=1$ and $y=1$ and solution $p$ has a boundary layer near $x=0$ and $y=0$, when $\eps$ goes to $0$.
\end{example}

\begin{figure}[ht]
    \centering
    \subfloat[Numerical solution $y_h$]{\includegraphics[height=2in]{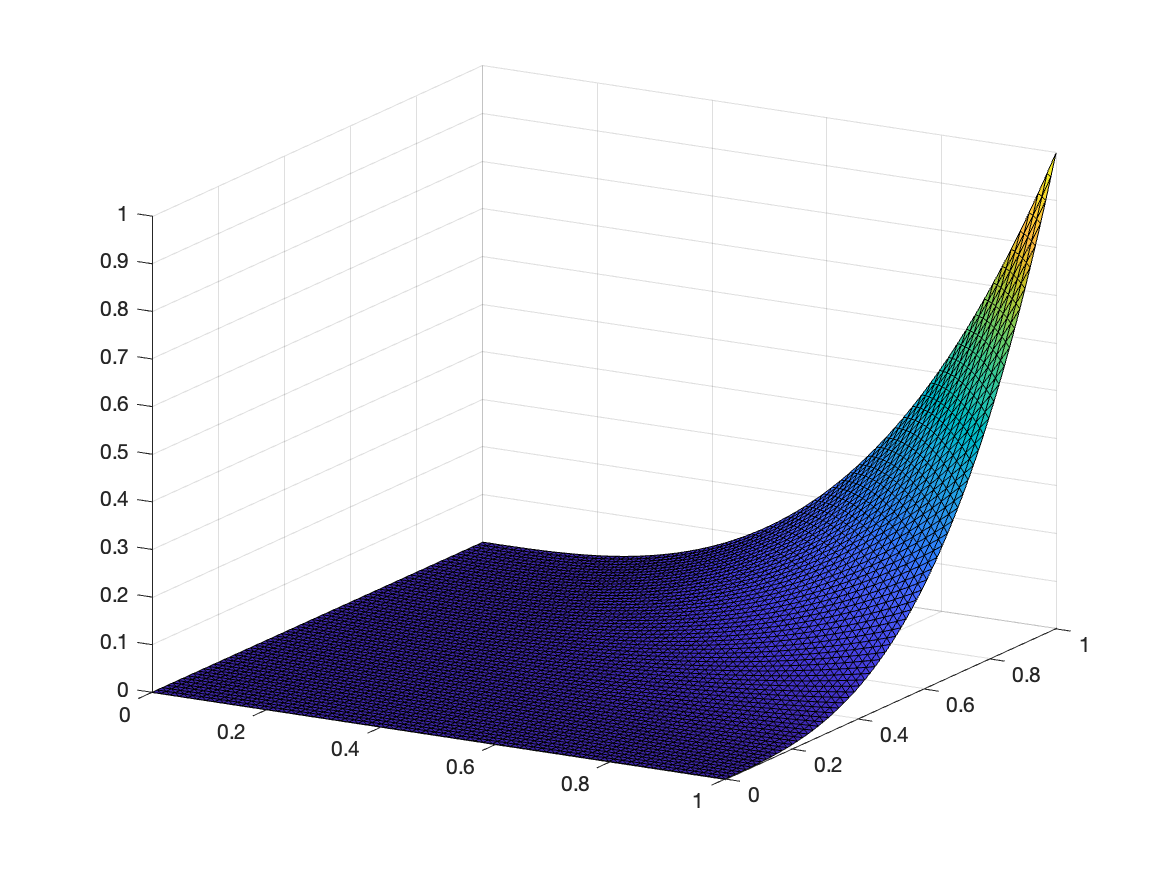}} 
    \subfloat[Numerical solution $p_h$]{\includegraphics[height=2in]{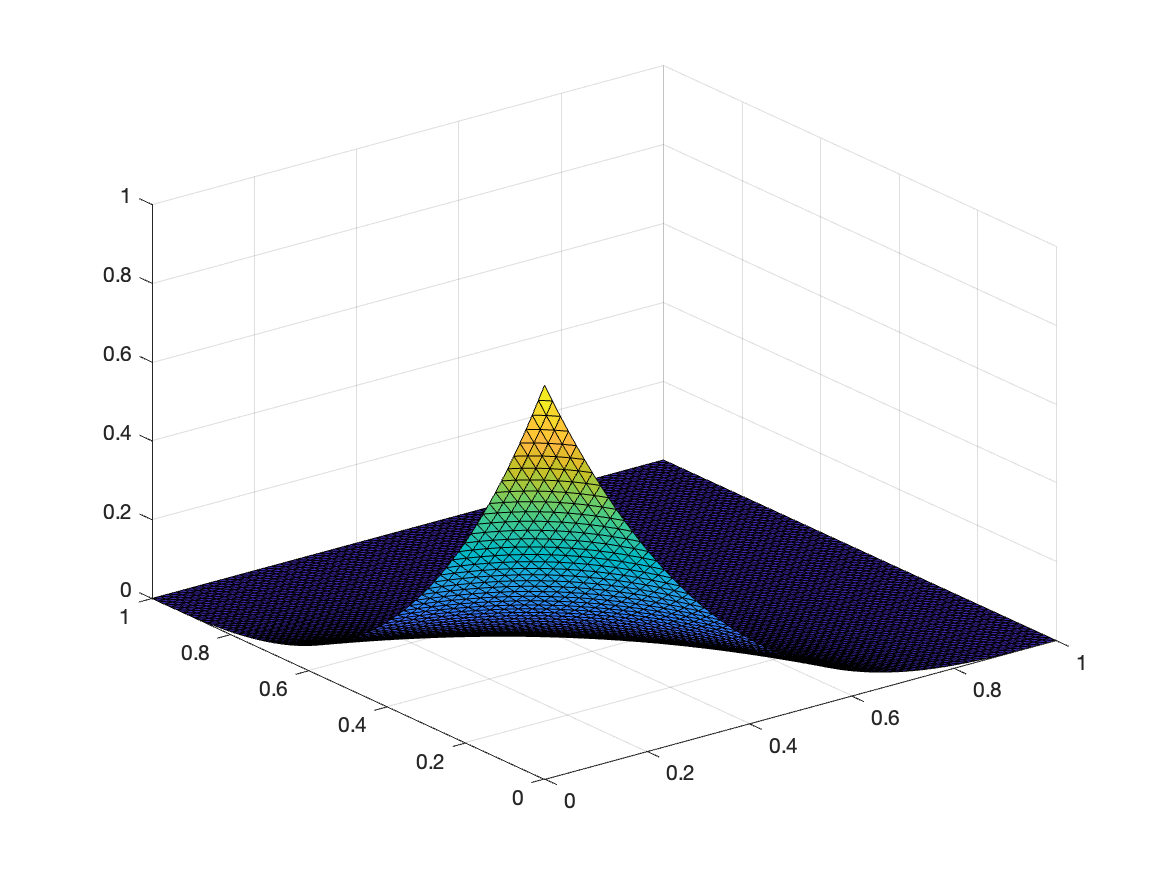}}
    \vfill
    \subfloat[Exact solution $y$]{\includegraphics[height=2in]{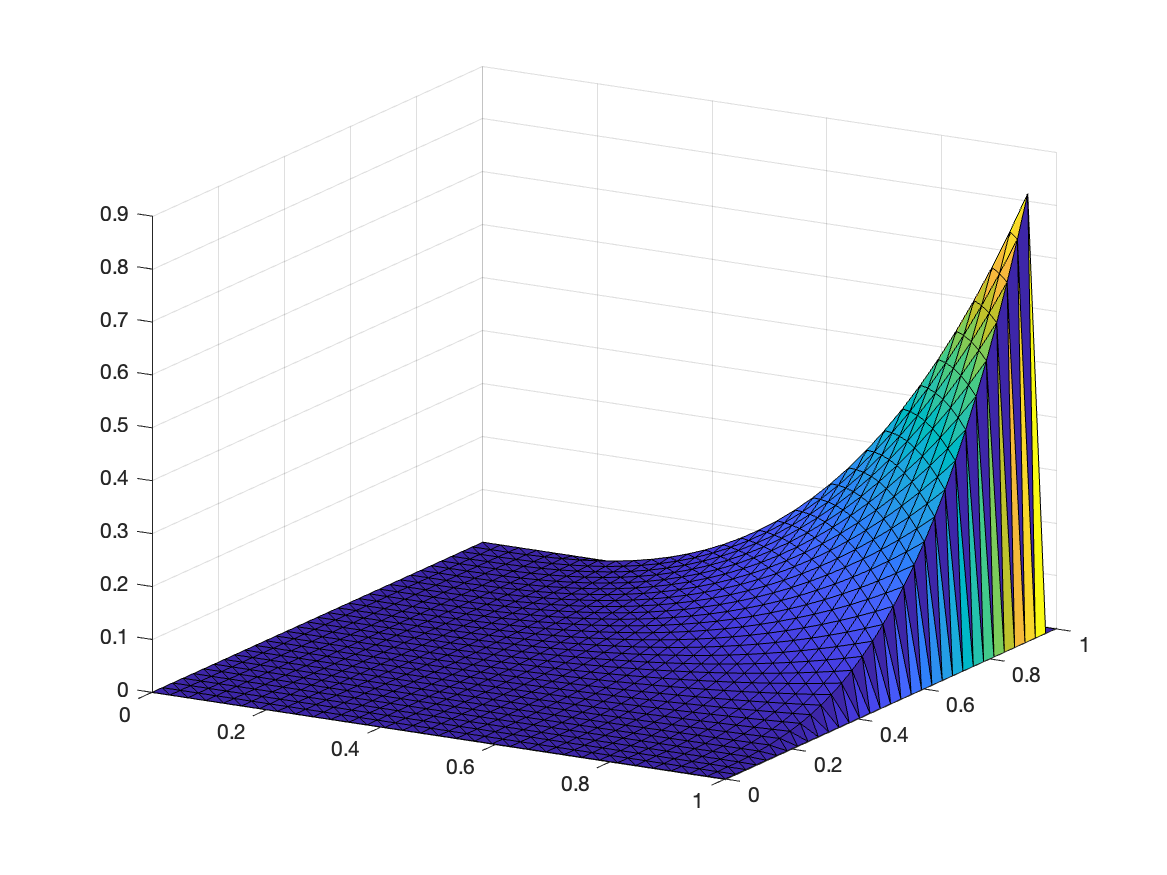}} 
    \subfloat[Exact solution $p$]{\includegraphics[height=2in]{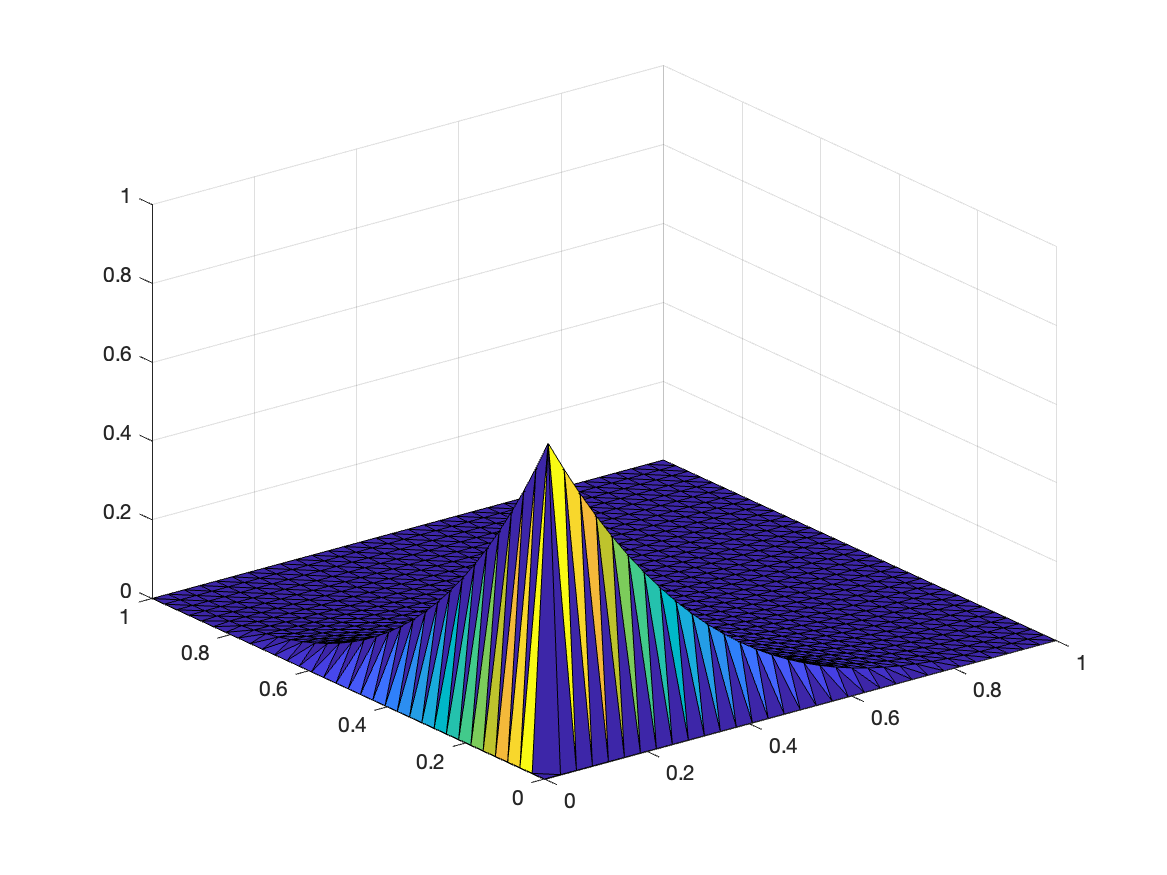}}
    \caption{Numerical solutions and exact solutions}\label{fig:bdlayernumer}
\end{figure}

We first show the global convergence results of the methods \eqref{eq:dgprob} with $\eps=10^{-9}$ for Example~\ref{ex:bdlayer}. We can see from Table \ref{table:bdlayerg} that the global convergence of the state and the adjoint state is $O(h^\frac12)$ in $L_2$ and $\|\cdot\|_{1,\eps}$ norms. 
These deteriorated convergence rates are caused by the sharp boundary layers presented near the outflow boundary. See Figure \ref{fig:bdlayernumer} for the comparison between numerical solutions and exact solutions. One can easily see that the boundary layers are ignored due to the weak treatment of the boundary conditions.

On the other hand, we measure the errors in the interior of the domain $[0.25,0.75]^2$, which is away from the boundary layers. We found that the convergence rates are optimal in $L_2$ and $\|\cdot\|_{H^1(\cT_h)}$ norms, as can be seen from Table \ref{table:bdlayerl}. This illustrates the advantages of DG methods for optimal control problems, as the boundary layers do not pollute the solutions into the interior, where the solution is smooth (cf. \cite{leykekhman2012local}). Again, this is due to the fact that DG methods impose the boundary conditions weakly. This is in contrast to methods that impose the boundary conditions strongly, for example, the SUPG method \cite{heinkenschloss2010local},  in which the oscillations propagate into the interior and one can at most expect $O(h)$ convergence for any polynomial degrees.

We then show the MINRES numbers of iterations in Table \ref{table:bdminres} for various $\eps$ and different implementations of the preconditioner. We again observe that the preconditioner \eqref{eq:precon} is robust with respect to $\eps$. Similar MINRES numbers of iterations are observed for the multigrid preconditioner, as well as the block Gauss-Seidel iterations for small values of $\eps$. We also report the MINRES numbers of iterations in Table \ref{table:bdminres1} for different values of $\beta$. We again observe the robustness of the preconditioner with respect to $\beta$.


\begin{table}[htb]
\caption{Convergence rates for Example \ref{ex:bdlayer} with $\varepsilon=10^{-9}$ (Global)}\label{table:bdlayerg}
\begin{tabular}{ccccccccc}\hline
\\
$k$&$\|e_y\|_\LT$&Order&$\|e_y\|_{1,\eps}$&Order&$\|e_p\|_\LT$&Order&$\|e_p\|_{1,\eps}$&Order\\
\\
\hline
$1$&1.33e-01&-&1.35e-01&-&1.35e-01&-&1.37e-01&-\\
$2$&1.28e-01&0.06&1.29e-01&0.07&1.29e-01&0.07&1.30e-01&0.08\\
$3$&1.03e-01&0.32&1.03e-01&0.32&1.03e-01&0.32&1.03e-01&0.33\\
$4$&7.59e-02&0.44&7.59e-02&0.44&7.59e-02&0.44&7.59e-02&0.44\\
$5$&5.43e-02&0.48&5.43e-02&0.48&5.43e-02&0.48&5.43e-02&0.48\\
$6$&3.85e-02&0.50&3.85e-02&0.50&3.85e-02&0.50&3.85e-02&0.50\\
$7$&2.73e-02&0.50&2.73e-02&0.50&2.73e-02&0.50&2.73e-02&0.50\\
$8$&1.93e-02&0.50&1.93e-02&0.50&1.93e-02&0.50&1.93e-02&0.50\\
\hline
\end{tabular}
\end{table}


\begin{table}[htb]
\caption{Convergence rates for Example \ref{ex:bdlayer} with $\varepsilon=10^{-9}$ (Local)}\label{table:bdlayerl}
\begin{tabular}{ccccccccc}\hline
\\
$k$&$\|e_y\|_\LT$&Order&$\|e_y\|_{H^1(\cT_h)}$&Order&$\|e_p\|_\LT$&Order&$\|e_p\|_{H^1(\cT_h)}$&Order\\
\\
\hline
$1$&3.86e-03&-&2.16e-02&-&2.58e-03&-&1.57e-02&-\\
$2$&5.88e-04&2.72&7.19e-03&1.59&3.08e-04&3.07&4.25e-03&1.88\\
$3$&3.67e-04&0.68&6.06e-03&0.25&1.86e-04&0.72&6.10e-03&-0.52\\
$4$&1.32e-04&1.48&4.18e-03&0.54&6.97e-05&1.42&4.40e-03&0.47\\
$5$&3.88e-05&1.76&2.46e-03&0.77&2.09e-05&1.74&2.60e-03&0.76\\
$6$&1.05e-05&1.88&1.33e-03&0.88&5.72e-06&1.87&1.41e-03&0.89\\
$7$&2.74e-06&1.94&6.93e-04&0.94&1.49e-06&1.94&7.31e-04&0.94\\
$8$&6.98e-07&1.97&3.53e-04&0.97&3.81e-07&1.97&3.73e-04&0.97\\
\hline
\end{tabular}
\end{table}

\begin{table}[htb]
\caption{MINRES number of iterations for Example \ref{ex:bdlayer}}\label{table:bdminres}
\begin{tabular}{cllll|ll|llll}
&\multicolumn{4}{c}{MG}&\multicolumn{2}{c}{BGS}&\multicolumn{4}{c}{BiCGSTAB}\\
\hline
\multirow{2}{*}{$k$}&\multicolumn{10}{c}{$\eps$}\\
\cline{2-11}\\[-2ex]
&$10^{-1}$&$10^{-3}$&$10^{-6}$&$10^{-9}$&$10^{-6}$&$10^{-9}$&$10^{-1}$&$10^{-3}$&$10^{-6}$&$10^{-9}$\\
\hline
$1$\ \vline&12&13&13&13&13&13&12&13&13&13\\
$2$\ \vline&13&15&16&16&16&16&13&15&16&16\\
$3$\ \vline&16&17&17&17&17&17&14&17&17&17\\
$4$\ \vline&18&17&19&19&19&19&15&17&19&19\\
$5$\ \vline&20&19&19&19&19&19&16&19&19&19\\
$6$\ \vline&22&22&20&20&20&20&16&20&20&20\\
$7$\ \vline&24&23&20&20&26&20&17&20&20&20\\
\hline
\end{tabular}
\end{table}

\begin{table}[htb]
\caption{MINRES numbers of iterations for Example \ref{ex:bdlayer} with different values of $\beta$ and $\eps=10^{-3}$}\label{table:bdminres1}
\begin{tabular}{cllll|llll}
&\multicolumn{4}{c}{MG}&\multicolumn{4}{c}{BiCGSTAB}\\
\hline
\multirow{2}{*}{$k$}&\multicolumn{8}{c}{$\beta$}\\
\cline{2-9}\\[-2ex]
&$10^{-1}$&$10^{-2}$&$10^{-4}$&$10^{-8}$&$10^{-1}$&$10^{-2}$&$10^{-4}$&$10^{-8}$\\
\hline
$1$\ \vline&15&18&11&5&15&18&11&5\\
$2$\ \vline&18&22&14&5&18&22&14&5\\
$3$\ \vline&20&23&17&5&20&23&17&5\\
$4$\ \vline&21&23&21&5&21&23&21&5\\
$5$\ \vline&21&25&23&6&21&25&23&6\\
$6$\ \vline&24&25&24&8&22&25&24&8\\
$7$\ \vline&27&27&24&11&22&25&24&11\\
\hline
\end{tabular}
\end{table}

\begin{example}[Interior Layer]\label{ex:inlayer}
    In this example, we take $\Omega=[0,1]^2$, $\gamma=0$, $\bm{\zeta}=[1,0]^t$ and let the exact solutions of \eqref{eq:regu} be 
    \begin{equation}
        y=(1-x_1)^3\arctan(\frac{x_2-0.5}{\eps})\quad\text{and}\quad p=x_1(1-x_1)x_2(1-x_2).
    \end{equation}
    The exact state $y$ has an interior layer along the line $x_2=0.5$ for small $\eps$. We take $\beta=1$ unless otherwise stated.
\end{example}

We show the global convergence results for $\eps=10^{-9}$ in Table \ref{table:inlayerg}. We see that the convergence rates in $L_2$ norm for the state and the adjoint state are $O(h^\frac32)$, which coincide with Theorem \ref{thm:dgesti}. We also observe $O(h)$ convergence for the state in $\|\cdot\|_{1,\eps}$ norm, which is caused by the interior layer. The convergence rate of the adjoint state in $\|\cdot\|_{1,\eps}$ norm is $O(h^\frac32)$ which is optimal in the sense of Remark \ref{remark:dgconv}. The local convergence results in Table \ref{table:inlayerl} are measured in the domain $[0.6,1]\times[0,1]$. The rates are all optimal in $L_2$ and $\|\cdot\|_{H^1(\cT_h)}$ norms, which again, shows that the interior layer does not pollute the solutions into the domain where the solutions are smooth.

We then show the MINRES numbers of iterations in Tables \ref{table:inlayerminres} and \ref{table:inlayerminres1} for various values of $\eps$ and $\beta$ respectively as well as for different implementations of the preconditioner. Similar results are observed as those of previous examples.

\begin{table}[htb]
\caption{Convergence rates for Example \ref{ex:inlayer} with $\varepsilon=10^{-9}$ (Global)}\label{table:inlayerg}
\begin{tabular}{ccccccccc}\hline
\\
$k$&$\|e_y\|_\LT$&Order&$\|e_y\|_{1,\eps}$&Order&$\|e_p\|_\LT$&Order&$\|e_p\|_{1,\eps}$&Order\\
\\
\hline
$1$&1.92e-01&-&2.14e-01&-&9.13e-02&-&9.41e-02&-\\
$2$&6.80e-02&1.50&9.03e-02&1.24&3.18e-02&1.52&3.25e-02&1.54\\
$3$&2.27e-02&1.58&3.87e-02&1.22&1.09e-02&1.55&1.11e-02&1.55\\
$4$&7.67e-03&1.57&1.75e-02&1.14&3.74e-03&1.54&3.80e-03&1.54\\
$5$&2.63e-03&1.54&8.27e-03&1.08&1.30e-03&1.53&1.32e-03&1.53\\
$6$&9.16e-04&1.52&4.01e-03&1.04&4.54e-04&1.51&4.61e-04&1.51\\
$7$&3.21e-04&1.51&1.97e-03&1.02&1.60e-04&1.51&1.62e-04&1.51\\
$8$&1.13e-04&1.51&9.78e-04&1.01&5.64e-05&1.50&5.73e-05&1.50\\
\hline
\end{tabular}
\end{table}

\begin{table}[htb]
\caption{Convergence rates for Example \ref{ex:inlayer} with $\varepsilon=10^{-9}$ (Local)}\label{table:inlayerl}
\begin{tabular}{ccccccccc}\hline
\\
$k$&$\|e_y\|_\LT$&Order&$\|e_y\|_{H^1(\cT_h)}$&Order&$\|e_p\|_\LT$&Order&$\|e_p\|_{H^1(\cT_h)}$&Order\\
\\
\hline
$1$&1.46e-01&-&5.40e-01&-&7.54e-02&-&3.02e-01&-\\
$2$&4.62e-02&1.65&3.48e-01&0.64&2.14e-02&1.82&1.44e-01&1.07\\
$3$&5.85e-03&2.98&7.37e-02&2.24&2.42e-03&3.14&1.84e-02&2.97\\
$4$&1.53e-03&1.93&3.69e-02&1.00&6.41e-04&1.92&7.38e-03&1.32\\
$5$&3.91e-04&1.97&1.84e-02&1.00&1.65e-04&1.96&3.15e-03&1.23\\
$6$&9.67e-05&2.02&9.02e-03&1.03&4.08e-05&2.01&1.40e-03&1.17\\
$7$&2.40e-05&2.01&4.46e-03&1.02&1.02e-05&2.01&6.53e-04&1.10\\
$8$&6.02e-06&2.00&2.23e-03&1.00&2.55e-06&1.99&3.16e-04&1.05\\
\hline
\end{tabular}
\end{table}

\begin{table}[htb]
\caption{MINRES number of iterations for Example \ref{ex:inlayer}}\label{table:inlayerminres}
\begin{tabular}{cllll|ll|llll}
&\multicolumn{4}{c}{MG}&\multicolumn{2}{c}{BGS}&\multicolumn{4}{c}{BiCGSTAB}\\
\hline
\multirow{2}{*}{$k$}&\multicolumn{10}{c}{$\eps$}\\
\cline{2-11}\\[-2ex]
&$10^{-1}$&$10^{-3}$&$10^{-6}$&$10^{-9}$&$10^{-6}$&$10^{-9}$&$10^{-1}$&$10^{-3}$&$10^{-6}$&$10^{-9}$\\
\hline
$1$\ \vline&12&15&14&14&14&14&12&15&14&14\\
$2$\ \vline&14&15&14&14&14&14&14&15&14&14\\
$3$\ \vline&16&15&13&13&13&13&14&15&13&13\\
$4$\ \vline&18&15&13&13&13&13&14&15&13&13\\
$5$\ \vline&18&15&11&11&11&11&14&15&11&11\\
$6$\ \vline&20&17&11&11&13&11&14&15&11&11\\
$7$\ \vline&22&19&13&11&17&11&14&15&13&11\\
\hline
\end{tabular}
\end{table}

\begin{table}[htb]
\caption{MINRES numbers of iterations for Example \ref{ex:inlayer} with different values of $\beta$ and $\eps=10^{-3}$}\label{table:inlayerminres1}
\begin{tabular}{cllll|llll}
&\multicolumn{4}{c}{MG}&\multicolumn{4}{c}{BiCGSTAB}\\
\hline
\multirow{2}{*}{$k$}&\multicolumn{8}{c}{$\beta$}\\
\cline{2-9}\\[-2ex]
&$10^{-1}$&$10^{-2}$&$10^{-4}$&$10^{-8}$&$10^{-1}$&$10^{-2}$&$10^{-4}$&$10^{-8}$\\
\hline
$1$\ \vline&18&20&9&3&18&20&9&3\\
$2$\ \vline&19&22&12&4&19&22&12&4\\
$3$\ \vline&19&22&17&4&19&22&17&4\\
$4$\ \vline&21&23&21&4&21&23&21&4\\
$5$\ \vline&20&25&23&6&20&25&23&6\\
$6$\ \vline&21&25&25&7&20&25&25&7\\
$7$\ \vline&22&26&26&10&20&25&26&10\\
\hline
\end{tabular}
\end{table}

\section{Concluding Remarks}\label{sec:cncldremarks}




We  have proposed and analyzed discontinuous Galerkin methods for an optimal control problem constrained by a convection-dominated problem. Optimal estimates are obtained and an effective multigrid preconditioner has been developed to solve the discretized system. Numerical results indicate that our preconditioner is robust with respect to $\beta$ and $\eps$. However, theoretical justification of the robustness of our methods seems nontrivial. This will be investigated in a future project.

Our approach can also be easily extended to higher order DG methods assuming higher regularity of the solutions. One only needs to replace the projection estimates \eqref{eq:projesti} and \eqref{eq:projestiar} with
\begin{alignat*}{2}
     &\|z-\pi_hz\|_{\LT}+h\|z-\pi_hz\|_d\lesssim h^{l+1}\|z\|_{H^{l+1}(\O)}\quad&&\forall z\in V,\\
    &\|z-\pi_hz\|_{ar}\lesssim (\tau_c^{-\frac12}h^{l+1}+\|\bm{\zeta}\|_{0,\infty}^\frac12h^{l+\frac12})\|z\|_{H^{l+1}(\O)}\quad&&\forall z\in V,
\end{alignat*}
and proceed with the same argument as that of Theorem \ref{thm:dgesti}. Here the integer $l>1$ is the degree of the polynomials. We then obtain the following estimate which is similar to \eqref{eq:dgesti},
\begin{equation*}
\begin{aligned}
    & \trinorm{p-p_h}+\trinorm{y-y_h}\\
   & \lesssim C_\dagger\Big(\beta^\frac14(\eps^\frac12+\|\bm{\zeta}\|_{0,\infty}^{\frac12}h^{\frac12}+\tau_c^{-\frac12}h)h^{l}+h^{l+1}\Big)(\|p\|_{H^{l+1}(\O)}+\|y\|_{H^{l+1}(\O)}).
\end{aligned}
\end{equation*}

Our experiments have included BiCGSTAB($\ell$)  as building block for  our preconditioner for  comparison purposes. Although  the MINRES numbers of iterations by using BiCGSTAB($\ell$) were often the same as those of the multigrid operator, we emphasize that multigrid should still  be preferred in practice. Indeed, BiCGSTAB is a nonlinear solver because it 
 also depends on the right-hand side, so that the convergence of MINRES may be significantly affected by the BiCGSTAB solution accuracy. Moreover, BiCGSTAB depends on parameters such as a truncation and fill-in thresholds in its own ILU preconditioner. In contrast, multigrid may be used as a black box operator, and is an optimal $O(n)$ algorithm, where $n$ is the number of unknowns. We expect multigrid to outperform BiCGSTAB($\ell$) when $h \rightarrow 0$ in terms of computational time.

\section*{Acknowledgement}
This material is based upon work supported by the National Science Foundation under Grant No. DMS-1929284 while the authors were in residence at the Institute for Computational and Experimental Research in Mathematics in Providence, RI, during the Numerical PDEs: Analysis, Algorithms, and Data Challenges semester program.

Part of the work of VS was funded by the European Union - NextGenerationEU under the National Recovery and Resilience Plan (PNRR) - Mission 4 Education and research - Component 2 From research to business - Investment 1.1 Notice Prin 2022 - DD N. 104 of 2/2/2022, entitled ``Low-rank Structures and Numerical Methods in Matrix and Tensor Computations and their Application'', code 20227PCCKZ – CUP J53D23003620006. VS is member of the INdAM Research Group GNCS; its continuous support is gladly acknowledged.

\bibliographystyle{plain}
\bibliography{references}

\end{document}